\numberwithin{equation}{section}
\newtheorem{Theorem}{Theorem}[section]
\newtheorem{Corollary}[Theorem]{Corollary}
\newtheorem{Lemma}[Theorem]{Lemma}
 { \theoremstyle{definition}
\newtheorem{Definition}[Theorem]{Definition}
}
\begin{document}

\allowdisplaybreaks

\newcommand{\arXivNumber}{1706.05050}

\renewcommand{\PaperNumber}{050}

\FirstPageHeading

\ShortArticleName{Topology of Functions with Isolated Critical Points on the Boundary}

\ArticleName{Topology of Functions with Isolated Critical Points \\ on the Boundary of a 2-Dimensional Manifold}

\Author{Bohdana I.~HLADYSH and Aleksandr O.~PRISHLYAK}
\AuthorNameForHeading{B.I.~Hladysh and A.O.~Prishlyak}
\Address{Faculty of Mechanics and Mathematics, Taras Shevchenko National University of Kyiv,\\ 4-e Akademika Glushkova Ave., Kyiv, 03127, Ukraine}
\Email{\href{mailto:bohdanahladysh@gmail.com}{bohdanahladysh@gmail.com}, \href{mailto:prishlyak@yahoo.com}{prishlyak@yahoo.com}}

\ArticleDates{Received November 18, 2016, in f\/inal form June 16, 2017; Published online July 01, 2017}

\Abstract{This paper focuses on the problem of topological equivalence of functions with isolated critical points on the boundary of a compact surface $M$ which are also isolated cri\-tical points of their restrictions to the boundary. This class of functions we denote by~$\Omega(M)$. Firstly, we've obtained the topological classif\/ication of above-mentioned functions in some neighborhood of their critical points. Secondly, we've constructed a chord diagram from the neighborhood of a critical level. Also the minimum number of critical points of such functions is being considered. And f\/inally, the criterion of global topological equivalence of functions which belong to~$\Omega(M)$ and have three critical points has been developed.}

\Keywords{topological classif\/ication; isolated boundary critical point; optimal function; chord diagram}

\Classification{57R45; 57R70}

\section{Introduction}

Topological classif\/ication of spaces with a def\/ined structure on them is one of the main problems of topology. The most fundamental works devoted to functions classif\/ication are papers by G.~Reeb~\cite{Reeb}, A.S.~Kronrod \cite{Kronrod}, A.O.~Prishlyak \cite{Prishlyak}, V.V.~Sharko~\cite{Sharko}, A.A.~Kadubovskyi~\cite{Kadubovskyi2015} and I.A.~Iurchuk~\cite{Iurchuk}. A.S.~Kronrod and G.~Reeb have constructed the graph which allows to classify simple functions on closed surfaces. O.V.~Bolsinov and A.T.~Fomenko~\cite{Bolsinov/Fomenko} also investigate the layer and layer equipped equivalence using the concept of atoms and $f$-atoms.

Currently, there are many papers which are devoted to exploration of topological properties of functions on surfaces with the boundary, e.g., \cite{Hladysh/Prishlyak,Lukova/PrishlyakA/PrishlyakK,Maksumenko/Polulyakh,Vyatchaninova}. In~\cite{Hladysh/Prishlyak} we consider Morse functions, whose critical points belong to the boundary and are non-degenerated critical points of restriction of the function to the boundary of the manifold (called here $mm$\emph{-functions}).

According to \cite[Theorem~1.3]{Prishlyak2002} the function with isolated critical points on a closed surface is locally topologically equivalent to the function $f(x,y)=\operatorname{Re}(x+iy)^{k}$ for some integer non\-ne\-ga\-tive~$k$. In this paper we generalize this theorem up to the case when a critical point belongs to the boundary of the surface.

Another kind of important problem is to f\/ind a minimum number of critical points of functions def\/ined on a f\/ixed surface. This problem is considered in details in~\cite{Hladysh/Prishlyak,Lukova}. We call the function, having the minimum number of critical points as \emph{optimal}. As well as in~\cite{Hladysh/Prishlyak} the criterion of optimality of a $mm$-function is proved.

The present paper examines the class of smooth functions with isolated critical points on a~manifold $M$ belonging to the boundary and being also isolated critical points of their restrictions to~$\partial M$. It is also focused on the problem of getting a local topological classif\/ication and f\/inding a criterion of optimality of previously described functions.

\section{Local topological classif\/ication}

Let $M$ be a compact surface with the boundary $\partial M$ and $f\colon M \rightarrow \mathbb{R}$ be a smooth function on~$M$ with a f\/inite number of critical points all of which belong to~$\partial M$. The f\/inite number of critical points is equivalent to their isolation due to the compactness of~$M$.

We denote the restriction of function $f$ to the boundary $\partial M$ of the surface~$M$ by~$f|_{\partial M}$ and the set of smooth functions def\/ined on $M$, whose critical points are isolated, belong to the boundary and are also isolated critical points of restriction to~$\partial M$ by~$\Omega(M)$. Thus, we consider the following class of functions
\begin{gather*}
\Omega(M)=\big\{f\colon M\rightarrow\mathbb{R}\,|\,f\in C^{\infty}(M), \, {\rm CP}(f)={\rm ICP}(f)={\rm ICP}(f|_{\partial M})\big\},
\end{gather*}
where ${\rm CP}(f)$ (${\rm ICP}(f)$) is the set of (isolated) critical points of function~$f$.

\begin{Lemma}\label{lemma2.1}
Let $y$ be a regular value of a smooth function $f\colon M\rightarrow\mathbb{R}$, where $M$ is a compact surface with the boundary. Then the following statements hold true:
\begin{enumerate}\itemsep=0pt
\item[$(i)$] the level $f^{-1}(y)$ consists of a finite number of circles and line segments;

\item[$(ii)$] for arbitrary open set $U$ with smooth boundary being transversal to $f^{-1}(y)$, the intersection $f^{-1}(y)\cap U$ has a finite number of components.
\end{enumerate}
\end{Lemma}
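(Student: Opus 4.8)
The plan is to use the regular value theorem for manifolds with boundary. Since $y$ is a regular value of $f\colon M\to\mathbb{R}$ and also a regular value of $f|_{\partial M}$ (here I would first check that regularity of $y$ for $f$ forces regularity for the restriction, or simply assume $y$ avoids the finitely many critical values of $f|_{\partial M}$ as well — this is automatic since $f\in\Omega(M)$ has all its critical points on the boundary), the preimage $f^{-1}(y)$ is a properly embedded $1$-dimensional submanifold of $M$ with boundary contained in $\partial M$, and $\partial\big(f^{-1}(y)\big)=f^{-1}(y)\cap\partial M$. By the classification of compact $1$-manifolds with boundary, each connected component of $f^{-1}(y)$ is diffeomorphic either to a circle $S^1$ (if it misses $\partial M$) or to a closed interval $[0,1]$ (if it meets $\partial M$, necessarily in its two endpoints). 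This gives part $(i)$, once I argue finiteness: $f^{-1}(y)$ is a closed subset of the compact manifold $M$, hence compact, and a compact $1$-manifold has only finitely many components.

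For part $(ii)$, I would intersect with $U$. The hypothesis that $\partial U$ is smooth and transversal to $f^{-1}(y)$ means that $f^{-1}(y)\cap\overline{U}$ is again a compact $1$-manifold with boundary, where now the boundary consists of the points of $f^{-1}(y)\cap\partial M$ lying in $\overline U$ together with the transversal intersection points $f^{-1}(y)\cap\partial U$. Transversality of two submanifolds of complementary-plus dimension inside a surface (a curve meeting a curve) forces the intersection $f^{-1}(y)\cap\partial U$ to be a $0$-manifold, and compactness makes it finite. Then $f^{-1}(y)\cap U$ is an open subset of the compact $1$-manifold $f^{-1}(y)\cap\overline U$ obtained by deleting finitely many boundary points, so it still has finitely many components.

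The only genuine subtlety — what I would flag as the main obstacle — is the bookkeeping at the corners: the set $U$ is open in $M$, so $\overline U$ may meet $\partial M$, and the ``manifold with boundary'' structure of $f^{-1}(y)\cap\overline U$ really has corners where $\partial U$ crosses $\partial M$. One must either work in the category of manifolds with corners, or smooth these corners, or localize the argument so that near each such point the picture is a model half-plane with a line through it. Since only the number of components is at stake and not any smooth invariant, I would phrase the argument purely combinatorially: each component of $f^{-1}(y)\cap U$ is an arc or circle, its closure in $\overline U$ has endpoints among the finitely many points of $f^{-1}(y)\cap(\partial M\cup\partial U)$, and distinct components have disjoint closures' endpoint sets (or share only such boundary points), so a finite endpoint set bounds the number of arc-components, while circle-components sit inside the compact $1$-manifold $f^{-1}(y)$ which has finitely many of them in total. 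This sidesteps the corner issue entirely.
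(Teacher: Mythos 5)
Your proof is correct, but it takes a genuinely different route from the paper's. The paper handles the two cases of part $(i)$ separately and by hand: finiteness of the arcs is proved by contradiction via Rolle's theorem applied to $f|_{\partial M}$ on a boundary component meeting $f^{-1}(y)$ infinitely often (a limit of critical points being critical), and finiteness of the circles is proved by gluing $2$-disks onto all boundary components, extending $f$ to a function $F$ with isolated critical points on the resulting closed surface, and using compactness of $F^{-1}(y)$; part $(ii)$ is not really argued separately there (the closest analogue is the limit-point argument inside the proof of Lemma~\ref{lemma2.2}). You instead invoke the regular value theorem for manifolds with boundary together with the classification of compact $1$-manifolds, which is shorter, avoids the extension construction, and actually delivers a clean proof of $(ii)$ via finiteness of the transversal $0$-dimensional intersection $f^{-1}(y)\cap\partial U$. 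The one hypothesis your route genuinely needs, and which you rightly flag, is that $y$ also be a regular value of $f|_{\partial M}$, so that $f^{-1}(y)$ is a neat $1$-submanifold: this is \emph{not} implied by regularity for $f$ alone (consider $f(x,y)=x^2+y$ near the origin in $\{y\geq 0\}$, where $f^{-1}(0)$ is a single point), and the correct justification in this paper's setting is that for $f\in\Omega(M)$ the critical points of $f|_{\partial M}$ coincide with those of $f$ -- not merely that all critical points lie on the boundary, as your parenthetical suggests. Since the paper's own Rolle argument quietly relies on the same identification when it turns an accumulation of critical points of $f|_{\partial M}$ into a contradiction with regularity of $y$ for $f$, your approach costs nothing extra in hypotheses while being more standard and more complete on part $(ii)$.
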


\begin{proof} Firstly, according to \cite[Lemma~1]{Milnor}, the preimage of a regular level is 1-dimensional manifold. Thus, it can not include the isolated points. Also the limit of critical points is a~critical point. It follows from the continuity of partial derivatives and from the def\/inition of critical points.

Let $y$ be a regular value of $f$. Firstly, we prove that level $f^{-1}(y)$ has a f\/inite number of line segments. Let us suppose that it doesn't hold. Then there exists the component of the boundary which intersects the level line of $f$ in the inf\/inite number of points. Whereas the function acquires the same values, then (according to Rollya's theorem) there exists the critical value of function restriction to the boundary between every two such points. Thus, the restriction $f|_{\partial M}$ has the inf\/inite number of critical points. It means that the sequence of critical points has the limit point belonging to level $f^{-1}(y)$. The last statement contradicts to the condition as~$y$ is regular value of~$f$.

In the next part of this proof we are going to show that the level line $f^{-1}(y)$ contains a~f\/inite number of the circles. In order to do this we glue all components of the boundary $\partial M$ by 2-di\-men\-sional disks $D^{2}$. Thus, we get a closed surface $M'$. Let us continue the function $f$ on these disks to smooth function $F$ with a f\/inite number of critical points on each disk (we can do it by arbitrary function and after that approximate it by Morse function). In such a way we get the surface $M'$ and the function $F$ def\/ined on $M'$, whose critical points are also isolated. Then the preimage $F^{-1}(y)$ is compact, because of the compactness of the surface $M'$ and the closedness of the set $F^{-1}(y)$. In what follows $F^{-1}(y)$ includes the f\/inite number of circles. As a~result, the level line $f^{-1}(y)$ also includes the f\/inite number of the circles for the initial surface~$M$, because the number of the circles can only decrease after rejection of glued disks.
\end{proof}

The function with isolated critical point, being not local extreme, on a closed surface is locally topologically equivalent with the following function: $f(x,y)=\operatorname{Re}(x+iy)^{k}$ for some integer $k$, $k\geq1$ \cite{Prishlyak2002}. Therefore, f\/irstly, we consider the level lines of function $f(x,y)=\operatorname{Re}(x+iy)^{k}$, def\/ined on a surface $\mathbb{R}^2_{+}=\{(x,y)\in\mathbb{R}^2\,|\,y\geq0\}$ for $k\in\{1,2,3,4\}$ and in a general case. Also note that $p_0=(0,0)$ is an isolated critical point of the function $\operatorname{Re}(x+iy)^k$ and $0$ is a correspondent critical value.

We say that a function~$f$ has a \emph{local topological presentation} $f(x,y)=\operatorname{Re}(x+iy)^k$, $y\geq0$, if it is locally topologically equivalent to the function $\operatorname{Re}(x+iy)^k$, $y\geq0$.

If $k=1$, then $f$ has a local topological presentation $f(x,y)=x$, $y\geq0$. The level lines of $f$ are shown in Fig.~\ref{fig1}.1. Evidently the level $f(x,y)=0$ is the ray $x=0$, $y\geq0$.

In case $k=2$ we have $f(x,y)=\operatorname{Re}(x+iy)^{2}=x^{2}-y^{2}$, $y\geq0$ and level $f(x,y)=0$ consists of two rays $y=x$, $y\geq0$ and $y=-x$, $y\geq0$, see Fig.~\ref{fig1}.2.

When $k=3$, the function has a local presentation $f(x,y)=\operatorname{Re}(x+iy)^{3}=x^{3}-3xy^{2}$, $y\geq0$. Its level lines are shown in Fig.~\ref{fig1}.3. The critical level $f(x,y)=0$ consists now of three rays $x=0$, $y\geq0$, $y=\frac{x}{\sqrt{3}}$, $y\geq0$, and $y=-\frac{x}{\sqrt{3}}$, $y\geq0$.

If $k=4$, then $f(x,y)=\operatorname{Re}(x+iy)^{4}=x^{4}-4x^{2}y^{2}+y^{4}$, $y\geq0$ and its level set $f(x,y)=0$ consists of four rays $y=x\sqrt{2+\sqrt{3}}$, $y\geq0$, $y=x\sqrt{2-\sqrt{3}}$, $y\geq0$, $y=-x\sqrt{2+\sqrt{3}}$, $y\geq0$, and $y=-x\sqrt{2-\sqrt{3}}$, $y\geq0$, see Fig.~\ref{fig1}.4.
\begin{figure}[t]\centering
\includegraphics{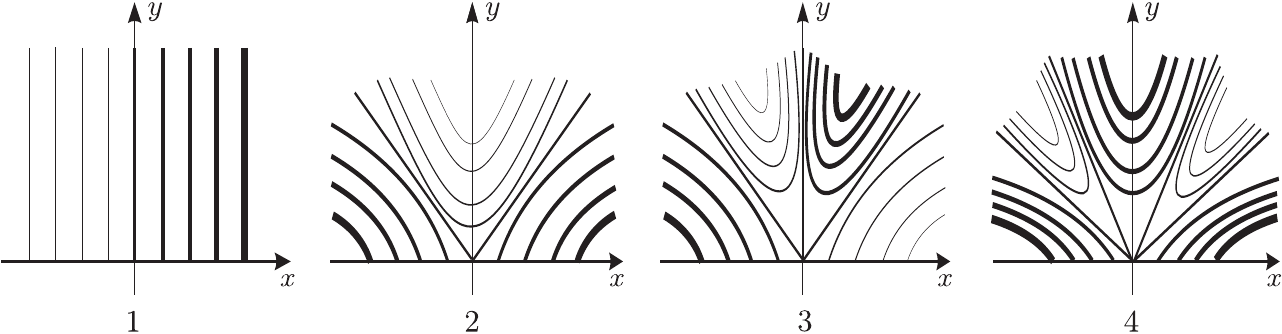}
\caption{}\label{fig1}
\end{figure}

Thus, in all the cases $k\in\{1,2,3,4\}$ the line $y=0$ is the axis of symmetry and it intersects with level $f^{-1}(0)$ of function $f$ at only one point $p_0$. Further we generalize this result to an odd and even $k$.

If $k=2n+1$ (for some integer $n$, $n\geq1$), then function $f$ has the following local topological presentation:
\begin{gather*} f(x,y)=\operatorname{Re}(x+iy)^{2n+1}=
\sum_{j=0}^{n}(-1)^{j}C_{2n+1}^{2j}x^{2n+1-2j}y^{2j}, \qquad y\geq0.
\end{gather*}

Finally, in case $k=2n$ (for some integer $n$, $n\geq1$) we get the function:
\begin{gather*}
f(x,y)=\operatorname{Re}(x+iy)^{2n}=\sum_{j=0}^{n}(-1)^{j}C_{2n}^{2j}x^{2n-2j}y^{2j}, \qquad y\geq0.
\end{gather*}

In both cases the line $y=0$ is the axis of symmetry, because if a point $(x_{0},y_{0})$ belongs to the level $f^{-1}(0)$, then the point $(x_{0},-y_{0})$ also belongs to this level.

\begin{Lemma}\label{lemma2.2}
Let $p_{0}\in\partial M$ be a critical point of a function $f\in\Omega(M)$ with the correspondent critical value $f(p_{0})=0$. Then there exists a neighborhood $U(p_{0})$ and a homeomorphism:
\begin{gather*}
h\colon \ f^{-1}(0)\cap \operatorname{cl}(U(p_{0}))\rightarrow \operatorname{Con}\left(\bigcup_{i=1}^{k}\{x_{i}\}\right)
\end{gather*}
for a finite set of points $\{x_{i},\, i=\overline{1,k}\}$ and some $k \in {\mathbb Z}$, $k\geq1$.

Here $\operatorname{cl}(U(p_{0}))$ is a closure of neighborhood $U(p_{0})$, $\operatorname{Con}\big(\bigcup_{i=1}^{k}\{x_{i}\}\big)$ is a union of $k$ direct lines having a single common point, being their common end $($see Fig.~{\rm \ref{fig2})}.
\end{Lemma}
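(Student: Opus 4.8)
The plan is to reduce the local structure of $f^{-1}(0)$ near $p_0$ to the model cases already analysed for $\operatorname{Re}(x+iy)^k$ on the closed half-plane. Since $f\in\Omega(M)$, the point $p_0$ is an isolated critical point of $f$ and of $f|_{\partial M}$; it is therefore either a local extremum of $f$ or not. In the non-extremum case, the cited result of Prishlyak \cite{Prishlyak2002} (together with the observation that we may choose boundary-respecting charts, as the boundary near $p_0$ is a smooth arc) gives a homeomorphism of a neighborhood of $p_0$ in $M$ onto a neighborhood of the origin in $\mathbb R^2_+$ carrying $f$ to $\operatorname{Re}(x+iy)^k$, $y\ge 0$. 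In the extremum case, $f-f(p_0)$ has constant sign near $p_0$, so $f^{-1}(0)\cap\operatorname{cl}(U(p_0))=\{p_0\}$ for small $U(p_0)$, which is the cone on one point ($k=1$) — so this case is subsumed by the model with the appropriate small $k$.

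The next step is to read off the local picture of $f^{-1}(0)$ from the explicit description of the model. For $f(x,y)=\operatorname{Re}(x+iy)^k$ on $y\ge 0$, the zero set consists of the rays emanating from the origin along the directions $\theta$ with $\cos(k\theta)=0$ intersected with $0\le\theta\le\pi$; writing $\theta_m=\frac{(2m+1)\pi}{2k}$ one counts exactly $\lceil k/2\rceil$ such rays when $p_0$ is an interior point of the arc, and this is precisely the content already verified above for $k\in\{1,2,3,4\}$ and for the general odd/even formulas. Restricting the model homeomorphism to $f^{-1}(0)\cap\operatorname{cl}(U(p_0))$ then gives a homeomorphism onto a union of finitely many segments sharing the single common endpoint at the image of $p_0$, i.e.\ onto $\operatorname{Con}\big(\bigcup_{i=1}^k\{x_i\}\big)$ after relabelling the number of rays as $k$. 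Finiteness of the number of branches is guaranteed by Lemma~\ref{lemma2.1}(ii): the zero set is locally the preimage of a value which becomes regular after a small perturbation of the boundary of $U(p_0)$, so it has finitely many components meeting $\operatorname{cl}(U(p_0))$.

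I would then assemble the homeomorphism $h$ as the composition of the chart homeomorphism from the local classification with the identification of the model zero set with the abstract cone, checking that the boundary condition $y\ge 0$ is respected so that the two boundary rays of $f^{-1}(0)$ (the ones lying in $\partial M$) are sent to two of the segments $\{x_i\}$ and that $p_0\mapsto$ the cone point. One should also handle the degenerate possibilities $k=1$ and $k=2$ uniformly — for $k=1$ the zero set is a single ray, a cone on one point; for a boundary extremum it is a single point, again a (degenerate) cone on one point — so that the statement holds with $k\ge 1$ in all cases.

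The main obstacle I anticipate is justifying that the local topological model of Prishlyak carries over to a boundary point: \cite{Prishlyak2002} is stated for closed surfaces, and one must argue that a function in $\Omega(M)$ near $p_0\in\partial M$ can be doubled across $\partial M$ (or, alternatively, that the classification argument localizes on the half-disk) so that the half-plane models $\operatorname{Re}(x+iy)^k$, $y\ge 0$, exhaust the possibilities. This doubling must be performed so that both $f$ and $f|_{\partial M}$ keep isolated critical points, which is exactly where the hypothesis $\mathrm{CP}(f)=\mathrm{ICP}(f)=\mathrm{ICP}(f|_{\partial M})$ is used; the remaining steps are then routine bookkeeping with the explicit ray directions computed above.
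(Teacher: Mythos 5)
Your argument rests on first establishing the local normal form $\operatorname{Re}(x+iy)^k$, $y\ge 0$, at the boundary point and then reading off its zero set. That reduction is exactly what is \emph{not} available at this stage: the classification in \cite{Prishlyak2002} is for points of a closed surface, and the extension to boundary points is precisely Theorem~\ref{theorem2.2} of this paper, whose proof \emph{uses} Lemma~\ref{lemma2.2}. So as a standalone proof the entire burden falls on your doubling step, which you yourself flag as the ``main obstacle'' but do not carry out, and it fails as stated: the reflection $F(x,y)=f(x,|y|)$ of a smooth function across $\partial M$ is in general only continuous along $y=0$, and even granting a topological classification of the doubled function, the conjugating homeomorphism it produces has no reason to preserve the line $y=0$, so it cannot be restricted back to the half-plane model. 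The paper avoids all of this with a direct elementary argument: on a punctured neighborhood $U\setminus\{p_0\}$ free of critical points, $K=(f^{-1}(0)\setminus\{p_0\})\cap U$ is a $1$-manifold; it has no closed components and no loops based at $p_0$ (either would force an extremum inside the enclosed region); it has finitely many components (otherwise $K\cap\partial U$ accumulates at a point of $K$ where the derivatives of $f$ along two transversal directions vanish, i.e., at a critical point); and after shrinking $U$ the surviving components together with $p_0$ form exactly $k$ arcs with common endpoint $p_0$. No normal form is needed, and indeed none can be invoked here without circularity.

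Some secondary errors in case you rework the argument along your lines: the zero set of $\operatorname{Re}(x+iy)^k$ in $y\ge0$ consists of the $k$ rays $\theta=(2m+1)\pi/(2k)$, $m=0,\dots,k-1$, not of $\lceil k/2\rceil$ rays (compare the paper's explicit lists for $k=2,3,4$); none of these rays lies in $\partial M$, and in fact no arc of $\partial M$ can lie in $f^{-1}(0)$, since every point of such an arc would be a critical point of $f|_{\partial M}$, contradicting $f\in\Omega(M)$ --- so there are no ``two boundary rays lying in $\partial M$'' to account for; and a single point is the cone on the empty set ($k=0$), not the cone on one point, so the extremum case is not subsumed by $k=1$.
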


\begin{figure}[t]\centering
\includegraphics[width=50mm]{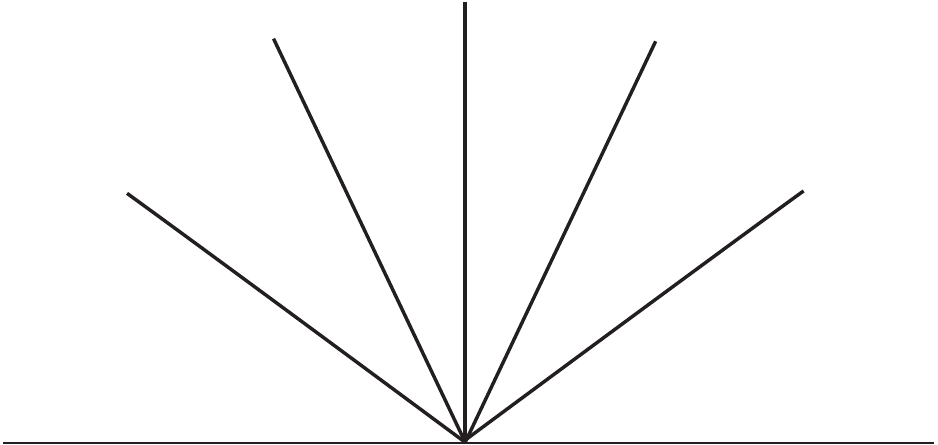}
\caption{}\label{fig2}
\end{figure}

\begin{proof} We consider a neighborhood $U$ of the point $p_{0}$, that does not contain critical points of function $f$ with the exception of $p_0$. Then the set $K := (f^{-1}(0){\setminus} \{p_{0}\})\cap U$ does not contain critical points of $f$ and it is a 1-dimensional manifold. If~$K$ includes a closed curve, then within this curve the function $f$ will have a point of local extremum, which is impossible. In the same way you can make sure that component $K$ can not form the loop with the vertex~$p_{0}$.

Let us show that $K$ has a f\/inite number of components. Suppose there are inf\/initely many such components. One can always choose a neighborhood $U$, that has smooth and transversal to each component of $K$ boundary. Then the set $K\cap \partial U$ has a limit point belonging to the set~$K$. The partial derivatives of~$f$ by the directions $\partial U$ and $K$ equal zero at this limit point. It means that this point is critical. The last sentence contradicts to the conditions. Thus, we proved the f\/initeness of components of the set~$K$.

Afterwards, if it is necessary, again reduce the neighborhood $U$ to one, that does not include that components of $K$ for which $p_{0}$ is not a limit point. Then, the rest of components with the point $p_{0}$ form the union of $k$ direct lines, which have a single point $p_0$ being their common end.
\end{proof}

\begin{Definition}\label{definition2.1}
Two given smooth functions $f$ and $g$, def\/ined on some surfaces $M$ and $N$ respectively, are said to be \emph{layer equivalent} if there exists a~homeomorphism $\lambda \colon M\rightarrow N$, which maps the components of the level sets of $f$ onto the components of the level sets of~$g$.
\end{Definition}

\begin{Definition}\label{definition2.2}
Two given smooth functions $f$ and $g$ are said to be \emph{layer equipped equivalent} in some neighborhoods of their critical levels $f^{-1}(c_{1})$ and $g^{-1}(c_{2})$ if there exist $\varepsilon_{1}>0$, $\varepsilon_{2}>0$ and a homeomorphism $\lambda \colon f^{-1}(c_{1}-\varepsilon_{1},c_{1}+\varepsilon_{1}) \rightarrow g^{-1}(c_{2}-\varepsilon_{2},c_{2}+\varepsilon_{2})$, which maps the components of the level sets of $f$ onto the components of the level sets of~$g$ and $\lambda$ preserve the growing directions of functions.
\end{Definition}

\begin{Definition}\label{definition2.3}
Two given smooth functions $f$ and $g$, def\/ined on some surfaces $M$ and $N$ respectively, are said to be \emph{topologically equivalent} if there exist homeomorphisms $h_1\colon M\rightarrow N$, $h_2\colon \mathbb{R}\rightarrow\mathbb{R}$, such that $h_2\circ f=g\circ h_1$ and $h_2$ preserves the orientation of $\mathbb{R}$.
\end{Definition}

\begin{Theorem}\label{theorem2.1}
Function $f\in\Omega(M)$ is topologically equivalent to the function $g(x,y)=x^{2}+y^{2}$, $y\geq0$ $(g(x,y)=-x^{2}-y^{2}$, $y\geq0)$ in some neighborhood of its local minimum $($maximum$)$ point.
\end{Theorem}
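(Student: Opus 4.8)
The plan is to reduce everything to the local model and then build the equivalence by integrating a gradient‑like vector field, just as in the product‑structure arguments of Morse theory. It suffices to treat the minimum (the maximum case follows by replacing $f$ by $-f$), and we may assume $f(p_0)=0$. First I would use that $M$ is a surface with boundary and $p_0\in\partial M$ to pick a chart identifying a neighbourhood of $p_0$ in $M$ with a half‑disc $D^{2}_{+}=\{u^{2}+v^{2}\le r_{0}^{2},\ v\ge 0\}$, with $p_0\mapsto 0$ and $\partial M\mapsto\{v=0\}$, and then shrink $r_0$ so that: (a) $f$ has no critical point on $D^{2}_{+}\setminus\{p_0\}$; (b) $f|_{\partial M}$ has no critical point on $(\partial M\cap D^{2}_{+})\setminus\{p_0\}$, which is possible because $p_0\in\mathrm{ICP}(f|_{\partial M})$; and (c) $p_0$ is a minimum of $f$ on $D^{2}_{+}$. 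Then $f>0$ on $D^{2}_{+}\setminus\{p_0\}$, since any other zero would be a local minimum, hence a critical point of $f$, contradicting (a); in particular $f^{-1}(0)\cap D^{2}_{+}=\{p_0\}$.

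Next I would analyse the regular levels near $p_0$, arguing as in the proofs of Lemmas~\ref{lemma2.1} and~\ref{lemma2.2}. By (b) and (c) the restriction $f|_{\partial M}$ is strictly increasing on the right half of the diameter and strictly decreasing on the left half, so for each small $\varepsilon>0$ there are unique points $b(\varepsilon)<0<a(\varepsilon)$ on the diameter with value $\varepsilon$, depending continuously and monotonically on $\varepsilon$. By (a) every small $\varepsilon>0$ is a regular value of $f$ on $D^{2}_{+}\setminus\{p_0\}$ and, by (b), the level $f^{-1}(\varepsilon)$ is transverse to $\partial M$ there; shrinking $r_0$ once more so that $f\ge m>0$ on the outer arc $\{u^{2}+v^{2}=r_{0}^{2}\}$, the set $L_\varepsilon:=f^{-1}(\varepsilon)\cap D^{2}_{+}$ is a compact $1$‑manifold with $\partial L_\varepsilon=\{(b(\varepsilon),0),(a(\varepsilon),0)\}$. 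It has no circle component (such a circle would bound a disc in the convex set $D^{2}_{+}$ on which $f$ would attain an interior extremum, i.e.\ a critical point other than $p_0$), so having exactly two boundary points it is a single arc joining $(b(\varepsilon),0)$ to $(a(\varepsilon),0)$. Hence the sublevel region $R_\varepsilon:=f^{-1}([0,\varepsilon])\cap D^{2}_{+}$ is a closed half‑disc‑shaped neighbourhood of $p_0$ bounded by $L_\varepsilon$ together with the diameter segment $[b(\varepsilon),a(\varepsilon)]\times\{0\}$; the $R_\varepsilon$ are nested with $\bigcap_{\varepsilon>0}R_\varepsilon=\{p_0\}$, so they shrink to $p_0$.

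To produce the homeomorphism I would fix a small $\varepsilon_0$ and choose a smooth vector field $X$ on a neighbourhood of $R_{\varepsilon_0}\setminus\{p_0\}$ in $M\setminus\{p_0\}$ with $df(X)\equiv 1$ and $X$ tangent to $\partial M$; it exists by a partition‑of‑unity patching of local solutions, where near the boundary one uses (b) to choose the local solution inside $T\partial M$. Following the flow of $-X$ from $L_{\varepsilon_0}$, each trajectory $t\mapsto q_t$ satisfies $f(q_t)=\varepsilon_0-t$, stays in $R_{\varepsilon_0}$ (it cannot cross the diameter, where $X$ is tangent, nor $L_{\varepsilon_0}$, where $f=\varepsilon_0$), and tends to $p_0$ as $t\to\varepsilon_0$ (then $q_t\in L_{\varepsilon_0-t}\subseteq R_{\varepsilon_0-t}$, which shrinks to $p_0$); conversely each point of $R_{\varepsilon_0}\setminus\{p_0\}$ reaches $L_{\varepsilon_0}$ in finite positive time, since $f$ grows at unit rate along $X$ while the forward trajectory stays in the compact set $f^{-1}([f(q),\varepsilon_0])$. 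Thus $(q,t)\mapsto q_t$ is a homeomorphism $L_{\varepsilon_0}\times[0,\varepsilon_0)\to R_{\varepsilon_0}\setminus\{p_0\}$ under which $f=\varepsilon_0-t$ and the two diameter halves correspond to the two endpoints of $L_{\varepsilon_0}$. Carrying out the identical construction for $g(x,y)=x^{2}+y^{2}$ on $\{x^{2}+y^{2}\le\varepsilon_0,\ y\ge 0\}$ (only critical point the origin; $g|_{\{y=0\}}=x^{2}$ monotone on each side; levels semicircles), and parametrising both $L_{\varepsilon_0}$ and the semicircle $g^{-1}(\varepsilon_0)$ by $[0,1]$ from left endpoint to right endpoint, I would compose the two identifications to get $h_1\colon R_{\varepsilon_0}\to\{g\le\varepsilon_0,\ y\ge 0\}$ with $g\circ h_1=f$; extending by $h_1(p_0)=0$ yields a homeomorphism, continuity at $p_0$ following from the shrinking of $R_\varepsilon$. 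Together with $h_2=\mathrm{id}_{\mathbb R}$ this gives the asserted topological equivalence near the minimum.

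I expect the second and third paragraphs to carry the real content. The delicate point is upgrading "no critical points on a punctured half‑disc" to a genuine trivial product structure: one must know that the arcs $L_\varepsilon$ foliate $R_{\varepsilon_0}\setminus\{p_0\}$ as a product over $(0,\varepsilon_0]$, and the cleanest route is the flow of $X$, whose completeness and boundary behaviour — remaining inside $R_{\varepsilon_0}$, limiting onto $p_0$, meeting $L_{\varepsilon_0}$ in finite time — all require the checks indicated above. A secondary technical obstacle is arranging $X$ to be simultaneously gradient‑like and tangent to $\partial M$; this is exactly where hypothesis~(b), i.e.\ the absence of boundary critical points of $f|_{\partial M}$ near $p_0$, is used.
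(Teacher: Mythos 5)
Your proposal is correct and follows essentially the same route as the paper: both arguments exploit that near the extremum the restriction $f|_{\partial M}$ has no further critical points, build a gradient-like vector field tangent to $\partial M$, fix a homeomorphism between one regular level of $f$ and the corresponding level of $g$, and transport it along trajectories to get the local equivalence. Your version simply supplies the details the paper leaves implicit (that the nearby levels are single arcs, that the flow gives a product structure over $(0,\varepsilon_0]$, and continuity at $p_0$).
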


\begin{proof}
Theorem statement has local nature, so we consider some small enough neighborhood of a local minimum (maximum) point and all consideration will take place in this neighborhood.

Every level line of a function $f\in\Omega(M)$ is transversal to the boundary~$\partial M$, because the restriction of this function has no critical point with the exception of minimum (maximum) point. We construct a grad-like vector f\/ield $X$, being tangent to the boundary. Let~$f$ takes value~$c$ at~$p_{0}$. Consider the homeomorphism~$h$ which maps the level $f^{-1}(c+\varepsilon)$ (for some $\varepsilon>0$) into level $x^{2}+y^{2}=\varepsilon$, $y\geq0$ of $g$.

Let us denote the trajectory of f\/ield $X$ which passes though a point $x$ by $\gamma(x)$ and the trajectory of f\/ield $X$ which passes though a~point~$y$ by~$\beta(y)$. Then, desired homeomorphism of neighborhood is def\/ined by the formula: $H(x)=\beta(h(\gamma(x)\cap f^{-1}(c+\varepsilon)))\cap g^{-1}(\varepsilon)$.
\end{proof}

Notice that a critical point is a \emph{saddle critical point} if it is not the point of local minimum or local maximum.

\begin{Theorem}\label{theorem2.2}
Let $f\in\Omega(M)$, $p_{0}$ be a saddle critical point of function~$f$. Then there exists a neighborhood $U(p_{0})$ of~$p_{0}$, such that the restriction $f|_{U(p_0)}$ is topologically equivalent to the function $g(x,y) = \mathrm{Re}(x+iy)^{k}$, $y\geq 0$ which is defined in some neighborhood of $(0,0)$ for some integer $k\geq1$.
\end{Theorem}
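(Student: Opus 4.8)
The plan is to reduce the statement to a combination of the two preceding lemmas plus a Morse-theoretic trajectory argument analogous to the one used in Theorem~\ref{theorem2.1}. First I would fix a neighborhood $U(p_0)$ as provided by Lemma~\ref{lemma2.2}: shrinking if necessary, $U(p_0)$ contains no critical point of $f$ other than $p_0$, and the critical level $f^{-1}(0)\cap\operatorname{cl}(U(p_0))$ is homeomorphic to $\operatorname{Con}\big(\bigcup_{i=1}^k\{x_i\}\big)$, a union of $k$ arcs emanating from $p_0$. The integer $k$ here is exactly the integer appearing in the model $g(x,y)=\operatorname{Re}(x+iy)^k$, since the critical level of $\operatorname{Re}(x+iy)^k$ on $y\ge 0$ consists of precisely $k$ rays through the origin (as verified in the excerpt for $k\in\{1,2,3,4\}$ and by the general binomial expansion for arbitrary $k$). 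So the first key step is: read off $k$ from the number of local branches of $f^{-1}(0)$ at $p_0$ and record that the combinatorial picture of the critical level matches that of the model.

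Next I would describe the full neighborhood, not just the critical level. The $k$ boundary arcs of $f^{-1}(0)$ cut $\operatorname{cl}(U(p_0))$ into $k$ (or $k{+}1$, depending on how the rays meet $\partial M$) closed sectors; by Lemma~\ref{lemma2.1} each regular level $f^{-1}(y)\cap U(p_0)$ for small $y\ne 0$ is a finite union of arcs, and — because $U(p_0)$ is free of other critical points and $f|_{\partial M}$ has an isolated critical point at $p_0$ — each sector contains exactly one connected arc of each nearby level on its side, with $f$ monotone across the sector. This is exactly the local layer structure of $\operatorname{Re}(x+iy)^k$: alternating sectors on which the function increases and decreases as one crosses the critical rays. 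The second key step is therefore to establish this sector-by-sector description and to match it with the model's sectors, using that every level line is transversal to $\partial M$ away from $p_0$ (the restriction has no other critical point) so the arcs meet the boundary cleanly.

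Having matched the combinatorics, I would build the homeomorphism by the same trajectory construction as in Theorem~\ref{theorem2.1}. Construct a gradient-like vector field $X$ for $f$ on $U(p_0)$ that is tangent to $\partial M$ and vanishes only at $p_0$; its trajectories foliate each sector and run from one boundary component of the sector to the other, transverse to the levels. Do the same on the model side for $g$. Pick small $\varepsilon>0$, identify the ``outer'' level $f^{-1}(\varepsilon)\cap\operatorname{cl}(U(p_0))$ (a disjoint union of arcs) with the corresponding level $g^{-1}(\varepsilon)$ arc-by-arc via some homeomorphism $h$ respecting the sector labels, and analogously identify $f^{-1}(-\varepsilon)$ with $g^{-1}(-\varepsilon)$; then extend over the critical level by sending $p_0\mapsto(0,0)$ and matching the $k$ branches of $f^{-1}(0)$ to the $k$ rays of $g^{-1}(0)$. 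Finally push this across the neighborhood along trajectories by the formula of Theorem~\ref{theorem2.1}'s type: a point $x$ goes to the unique point of the corresponding $g$-trajectory lying on the level $g^{-1}(f(x))$. One checks this is a well-defined homeomorphism $H\colon U(p_0)\to$ (neighborhood of $(0,0)$ in $\{y\ge 0\}$) intertwining $f$ and $g$ up to an orientation-preserving homeomorphism of $\mathbb{R}$ (identity, here), which is the required topological equivalence.

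The main obstacle I expect is the gluing along the critical level $f^{-1}(0)$: away from $p_0$ the trajectory flow gives a product structure and the matching is routine, but near the singular point the trajectories degenerate and one must verify that the piecewise-defined map (built separately on $y>0$ and $y<0$ parts, i.e.\ on the collection of sectors) actually extends continuously across $f^{-1}(0)$ and across $p_0$ to a homeomorphism. This requires controlling the flow near $p_0$ — choosing $X$ so that trajectories in adjacent sectors limit onto the separating ray compatibly — and checking that the $k$ branches are attached in the same cyclic order in $U(p_0)$ as in the model, so that the extension is orientation-coherent. The transversality of levels to $\partial M$ and the fact, noted in the excerpt, that $y=0$ is an axis of symmetry meeting $f^{-1}(0)$ only at the cone point, are what make this cyclic matching possible.
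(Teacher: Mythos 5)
Your proposal follows essentially the same route as the paper: take the neighborhood from Lemma~\ref{lemma2.2}, cut it into sectors along the critical level, and transport the Theorem~\ref{theorem2.1} trajectory formula sector by sector, gluing the resulting homeomorphisms along their common boundaries. The one point you flag as an ``obstacle'' is exactly what the paper's proof adds: inside each region $V$ of $U(p_0)\setminus f^{-1}(0)$ it locates, by a continuity (intermediate-value) argument applied to the map $h(x)=\phi_x\cap f^{-1}(0)$, a distinguished trajectory $\gamma$ limiting to $p_0$ (pushed into $\partial M$ when $V$ meets the boundary, after making the field tangent there), and it is only after cutting along these extra trajectories that each piece carries a genuine product structure on which the Theorem~\ref{theorem2.1} formula applies.
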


\begin{proof}
Let $U(p_{0})$ be the neighborhood with the properties described in Lemma~\ref{lemma2.2}. Then critical level of~$f$ divides~$U(p_{0})$ into regions. We consider one of these regions and denote it by~$V$. Let us suppose that $f(x)<0$ on $V$. Then a vector f\/ield $\operatorname{grad}(f)$ is directed inside the region~$V$ at the points of the intersection $\partial U(p_{0})\cap \partial V$.

We denote the trajectories of gradient vector f\/ield, passing through point $x$ by $\phi_x$. Let us consider a map $h\colon \partial U(p_{0})\cap \partial V\rightarrow f^{-1}(0)\cap\partial V$ being def\/ined by the condition $h(x)=\phi_x\cap f^{-1}(0)$ if $\phi_x\cap f^{-1}(0)\neq\varnothing$ and $h(x)=\lim\limits_{t\to \infty}\phi_x(t)=p_0$ if $\phi_x\cap f^{-1}(0)=\varnothing$ (see Fig.~\ref{fig3}). The map $h$ is continuous, because of the continuous dependence of the solution of dif\/ferential equation on initial condition. Then there exists a point being mapped into the point~$p_0$ and it means that there exists the trajectory $\gamma$ passing through $\partial U(p_{0})\cap \partial V$ and f\/inishing at the point~$p_{0}$. If~$V$ has common points with the boundary, then we change the f\/ield $\operatorname{grad}(f)$ in the neighborhood of the boundary into a such one, which is tangent to the boundary in some neighborhood. Then, the trajectory~$\gamma$ contains into the boundary.
\begin{figure}[t]\centering
\includegraphics{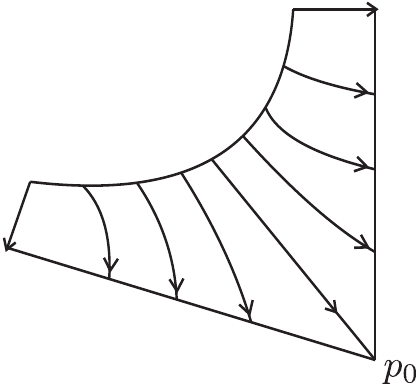}
\caption{}\label{fig3}
\end{figure}

The set $K$ with such constructed trajectories $\gamma$ divides the neighborhood $U(p_{0})$ into regions. Similarly to Theorem~\ref{theorem2.1}, we can construct a homeomorphism of each of these sectors (if it is necessary, after some reduction of $U(p_{0})$) into correspondent sectors of function $g(x,y) = \mathrm{Re}(x+iy)^{k} $, $y\geq 0$ which maps the level lines into level lines. Also if critical point has only one trajectory which enter in it, then the trajectories are mapped into trajectories by this homeomorphism.

Homeomorphisms mentioned above are the same at the common boundary, that is why they determine the searching one.
\end{proof}

Note that in case $k=1$ the neighborhood $U(p_{0})$ has two sectors having common points with the boundary with the exception of $p_{0}$, see Fig.~\ref{fig1}.1.

\section{Atoms of the function}

\begin{Definition}\label{definition3.1}
A function which has at most one critical point at each level line will be called \emph{simple}.
\end{Definition}

\begin{Definition}\label{definition3.2}
An \emph{atom} is a class of layer equivalence of function $f$ restriction to the set $f^{-1}([c-\varepsilon, c+\varepsilon])$, where~$c$ is a critical value of~$f$, for small enough $\varepsilon$, such that the line segment $[c-\varepsilon, c+\varepsilon]$ does not include critical values with the exception of~$c$.
\end{Definition}

\begin{Definition}\label{definition3.3}
A $f$-\emph{atom} is a class of layer equipped equivalence of restriction of function $f$ to the set $f^{-1}([c-\varepsilon, c+\varepsilon])$, where $c$ is critical value of $f$, for small enough $\varepsilon$ as above.
\end{Definition}

Remark that every atom has corespondent two f-atoms, which can be obtained one from another by changing the sing of the function.
In what follows we consider only simple functions and suppose that~$f(p_0)=0$, where~$p_0$ is an isolated saddle critical point of functions~$f$ and~$f|_{\partial M}$.

Let us consider the neighborhood of a critical point $p_{0}$ bounded by $f^{-1}(-\varepsilon)$, $f^{-1}(\varepsilon)$ for some small enough $\varepsilon>0$, by some trajectories of a gradient f\/ield and by the boundary~$\partial M$. The parts of the surface where $f>0$ and $f<0$ will be called the positive and negative sectors of function $f$. We depict these sectors by shaded and unshaded ones. The obtained surface has the structure of $(2k+2)$-gon. If we extend this neighborhood to the neighborhood of a critical level, we get the neighborhood which is homeomorphic to a polygon with glued sides by linear homeomorphism (e.g., in Fig.~\ref{fig4} the side $CB$ is glued with the side $DE$).
\begin{figure}[t]\centering
\includegraphics{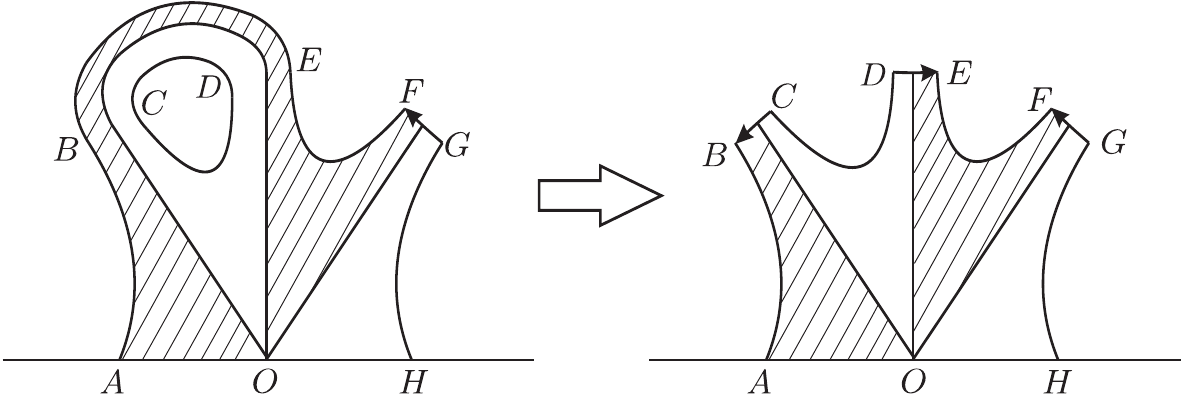}
\caption{}\label{fig4}
\end{figure}

Thus, atom has the structure of $(2k+2)$-gon, which is presented in Fig.~\ref{fig5}.
\begin{figure}[t]\centering
\includegraphics{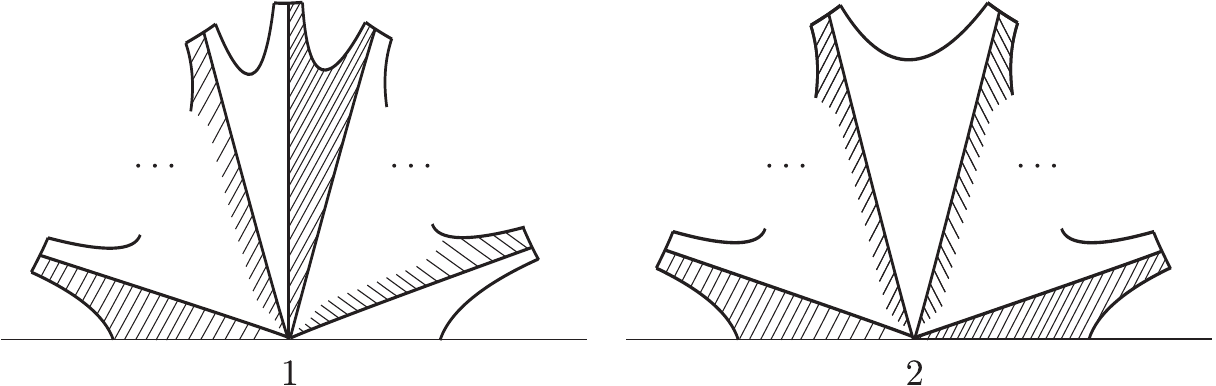}
\caption{}\label{fig5}
\end{figure}

We put a circle with matched points corresponding to the previously described polygon. This circle is the boundary of $(2k+2)$-gon and matched points are the points on the circle belonging to the intersection of shaded and unshaded sectors (in other words, matched points belong to the critical level).

We connect two matched points by a chord if and only if correspondent sides of polygon become glued after extending of critical level neighborhood. In what follows we get the circle with the matched points.

Further f\/ix the orientation on the boundary to numerate the matched points on the circle. If we change the orientation, we get the equivalent atom. Then we numerate matched points in the following way: a point corresponding to a critical one $p_{0}$ we denote by $Q_{0}$, and the rest of points we numerate according to the orientation of the boundary beginning with~$Q_{1}$ up to $Q_{k}$ and point $Q_{0}$ we consider as the point of reference. These points divide the circle into $k+1$ black (thick) and grey (thin) arcs. These arcs correspond to positive and negative sections (see Fig.~\ref{fig6}).
\begin{figure}[t]\centering
\includegraphics{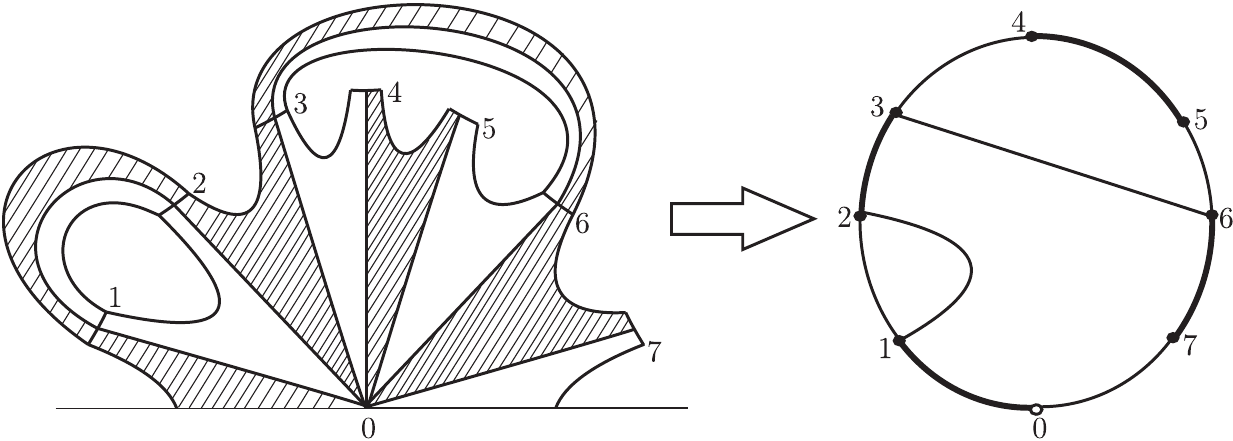}
\caption{}\label{fig6}
\end{figure}

Then, every atom can be def\/ined by the circle with $k+1$ matched points and $l$ chords (for some $l\in\{0,1,2,\ldots,[\frac{k}{2}]\}$). Also one matched point corresponds to a critical point.

\begin{Definition}\label{definition3.4}
\emph{A chord diagram of a saddle critical level} of the function def\/ined on a smooth compact surface with the boundary is the circle with the following elements:
\begin{itemize}\itemsep=0pt
\item[$(1)$] matched points, which are enumerated;
\item[$(2)$] chords, the ends of which are dif\/ferent matched points;
\item[$(3)$] coloration of arcs such that each two neighbor arcs with the exception of arcs near point $Q_{0}$, are of dif\/ferent colors.
\end{itemize}
\end{Definition}

Note that chord diagram def\/ines $f$-atom and if we consider only elements~(1) and~(2) then chord diagram def\/ines atom.

\begin{Definition}\label{definition3.5}
Two chord diagrams are called \emph{equivalent} if they can be obtained one from another by turn or symmetry preserving the elements~(1)--(3).
\end{Definition}

\begin{Definition}\label{definition3.6}
\emph{A free matched point} on chord diagram is the one which is not connected with another matched points by chord.
\end{Definition}

Chord diagrams are also considered in papers \cite{Kadubovskyi2015, Khruzin, Stoimenov2000}.

The circle of a chord diagram we denote by $S^{1}$, matched points by $0,1,\ldots,k$, and chord which connects points $i$ and $j$ by $l_{ij}$, $i,j\in\{1,\ldots,k\}$. We will say that $j$ is a number of a~matched point~$Q_{j}$. Free points with the exception of $Q_{0}$ we denote by~$Q_{i^{*}}$, where $i$ is the number of correspondent matched points. Each matched point of chord diagram corresponds to two vertices of the (previously described) $(2k+2)$-gon and one of these points belongs to a~positive sector~($f>0$) and another one to a negative sector ($f<0$). That is why we denote these points by~$P_{i}$ (positive) and $N_{i}$ (negative), where $i$ is the number of matched point~$Q_{i}$.

\begin{Lemma}\label{lemma3.1}
A number of free points $N_{\rm f.p}$ can be calculated from the formula
\begin{gather*}
N_{\rm f.p}=k-2\cdot N_{\rm ch}+1,
\end{gather*}
where $N_{\rm ch}$ is a number of chords of chord diagram.
\end{Lemma}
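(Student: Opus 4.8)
The plan is to treat this as a bookkeeping identity for the chord diagram built in the paragraphs preceding Definition~\ref{definition3.4}. First I would record the two facts that the construction already supplies: the circle of the diagram carries exactly $k+1$ matched points $Q_0,Q_1,\dots,Q_k$, and, by condition~$(2)$ of Definition~\ref{definition3.4}, every chord joins two \emph{distinct} matched points. The lemma then reduces to one structural claim: the chords form a \emph{partial matching} of the set of matched points, i.e.\ no matched point is an endpoint of two different chords. Granting this, the $N_{\mathrm{ch}}$ chords use up exactly $2N_{\mathrm{ch}}$ pairwise distinct matched points, these are precisely the non-free ones, and subtracting from the total $k+1$ yields $N_{\mathrm{f.p}}=(k+1)-2N_{\mathrm{ch}}=k-2N_{\mathrm{ch}}+1$.

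So the actual work is to prove that each matched point lies on at most one chord, and here I would return to the geometric meaning of the diagram. For $i\ge 1$, the matched point $Q_i$ is the point at which one of the $k$ local branches of the critical level $f^{-1}(0)$ issuing from $p_0$ — the rays of $\operatorname{Re}(x+iy)^k=0$, $y\ge 0$, supplied by Theorem~\ref{theorem2.2} — meets the boundary of the neighbourhood, and a chord $l_{ij}$ is drawn precisely when that branch, continued through the neighbourhood of the whole critical level, returns to another such point $Q_j$. By Lemma~\ref{lemma2.1} each branch is an arc, and by the ``no loop at $p_0$'' observation in the proof of Lemma~\ref{lemma2.2} it cannot close up at $p_0$; hence, following the branch out of $Q_i$, we either run into $\partial M$ (in which case $Q_i$ is free) or we arrive at exactly one other matched point $Q_j$ with $j\ne i$, and this is the unique chord through $Q_i$. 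The same argument shows $Q_0$ carries no chord (there is no branch emanating from the critical point itself to be glued), which is why the chords in fact match among $Q_1,\dots,Q_k$ and $N_{\mathrm{ch}}\le[k/2]$, consistently with the earlier remark.

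With the matching property in hand, the count is immediate and finishes the proof. I expect the only delicate point to be exactly this matching property — making precise that ``one branch of $f^{-1}(0)$ at $p_0$ corresponds to one potential chord end'' and that such a branch has a single other extremity — which rests on the one-dimensionality of regular levels (Lemma~\ref{lemma2.1}) and on the absence of loops at $p_0$ (Lemma~\ref{lemma2.2}); there is no computation involved, and the concluding arithmetic is trivial.
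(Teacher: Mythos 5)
Your proposal is correct and follows essentially the same route as the paper: count the $k+1$ matched points, observe that the $N_{\rm ch}$ chords pairwise-disjointly absorb $2N_{\rm ch}$ of them, and subtract. The paper's own proof is terser --- it simply invokes the $(2k+2)$-gon representation with $2l$ glued sides, leaving the partial-matching property implicit in the gluing construction --- whereas you justify that property explicitly from the one-dimensionality of the level and the absence of loops at $p_0$, which is a reasonable elaboration rather than a different argument.
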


\begin{proof}
There exists a neighborhood of critical level which can be represented in the form of polygon with $(2k+2)$ vertices and $2l$ glued sides (for some $l$). Then, the chord diagram of this critical level includes $k+1$ matched points and $l$ chords. Thus, after the renotation $N_{\rm ch}:=l$ we get the formula of calculation of free points number.
\end{proof}

In what follows we construct a substitution from the chord diagram. The substitution includes the cycle $(ij)$ (for some $i,j\in\{1,2,\ldots,k\}$) if and only if matched points $Q_i$ and $Q_j$ are connected by a chord. This substitution includes only the cycles with 2 elements.

The substitutions constructed in previously described way will be called \emph{gluing substitution} and the atom with its gluing substitution ${\tau^{(k)}}$ on the set $\{1,2,\ldots,k\}$ will be denoted $A_{\tau^{(k)}}$.

\begin{Theorem}\label{theorem3.1}
The following statements hold true:
\begin{itemize}\itemsep=0pt
\item[$1)$] each atom of saddle critical level coincides with atom $A_{\tau^{(k)}}$ for some gluing substitution $\tau^{(k)}$ on the set $\{1,2,\ldots,k\}$ and this substitution defines the gluing of atom sides;
\item[$2)$] a number $N_{k}$ of atoms $A_{\tau^{(k)}}$ can be calculated by the formula:
\begin{gather*}
N_{1}=1, \qquad N_{2}=2, \qquad N_{3}=4, \qquad N_{k}=2\sum_{j=1}^{k-3}P_{j}^{(k)}+P_{k-2}^{(k)},
\end{gather*}
where for each $k$ the set of numbers $P_{j}^{(k)}$ defined by a recurrent correlation
\begin{gather*}
P_{0}^{(k)}=1, \qquad P_{1}^{(k)}=k-1, \qquad P_{2}^{(k)}=k-2, \\
P_{j}^{(k)}=\big(P_{0}^{(k)}+P_{1}^{(k)}+\cdots+P_{j-2}^{(k)}\big)(k-j), \qquad j\in {\mathbb Z}, \quad j\geq3.
\end{gather*}
\end{itemize}
\end{Theorem}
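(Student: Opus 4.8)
The plan is to set up a correspondence between atoms of a saddle critical level and gluing substitutions, and then count the latter. Part~1) is essentially a repackaging of the structural description already obtained: by Theorem~\ref{theorem2.1} and Theorem~\ref{theorem2.2} together with Lemma~\ref{lemma2.2}, a neighborhood of the critical level is homeomorphic to a $(2k+2)$-gon in which $2l$ of the sides are identified in pairs by linear homeomorphisms, the identification being dictated precisely by which sectors reconnect when the neighborhood of the critical point is extended to a neighborhood of the whole critical level. Reading off the pairing of the $k$ non-reference matched points $Q_1,\dots,Q_k$ gives an involution (a product of disjoint transpositions) on $\{1,2,\dots,k\}$; conversely each such involution, together with the forced alternating coloring of the arcs away from $Q_0$, reconstructs the polygon-with-identifications and hence the layer-equivalence class of the atom. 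Thus $A_{\tau^{(k)}}$ is well defined and exhausts all atoms, with $\tau^{(k)}$ encoding the side-gluing. The only subtlety here is to check that equivalent chord diagrams (Definition~\ref{definition3.5}) give layer-equivalent atoms and conversely — this is where the choice of $Q_0$ as a reference point and the asymmetry of the coloring near $Q_0$ matter.

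For Part~2), the small cases $N_1=1$, $N_2=2$, $N_3=4$ I would verify directly by listing chord diagrams on circles with $2$, $3$, $4$ matched points (respecting the coloring constraint and the rigidity at $Q_0$). For the general formula I would count, up to the equivalence of Definition~\ref{definition3.5}, configurations of chords on a circle with $k+1$ enumerated points where $Q_0$ is distinguished. The idea is to stratify by the position of the chord emanating from the matched point adjacent to $Q_0$ (or by whether that point is free): if that point is joined to the point numbered $j$, the chord splits the remaining points into two arcs, one of which must be filled by a non-crossing-compatible sub-configuration while the other inherits a shifted sub-problem — this produces the factor $(k-j)$ and the "sum of earlier terms" structure in the recurrence for $P_j^{(k)}$. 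The base values $P_0^{(k)}=1$, $P_1^{(k)}=k-1$, $P_2^{(k)}=k-2$ correspond to the empty configuration and to the two ways the first one or two points can be attached. Summing over all admissible positions $j$, with the boundary term $P_{k-2}^{(k)}$ accounting for the extremal case and the factor $2$ for the symmetry in Definition~\ref{definition3.5}, yields $N_k=2\sum_{j=1}^{k-3}P_j^{(k)}+P_{k-2}^{(k)}$.

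The main obstacle I expect is the bookkeeping in the counting step: precisely formulating the recursion for $P_j^{(k)}$ so that it simultaneously respects the non-crossing-type constraint imposed by "sides glued by linear homeomorphisms", the alternating-color constraint, and the quotient by turns and the single reflection fixing $Q_0$. In particular one must be careful not to double-count symmetric diagrams (those invariant under the reflection) — this is exactly why the formula separates the generic contribution $2\sum_{j=1}^{k-3}P_j^{(k)}$ from the self-symmetric contribution $P_{k-2}^{(k)}$, and getting the ranges of summation and the base cases to match the small values $N_1,N_2,N_3$ is the real content of the verification. Once the recurrence for $P_j^{(k)}$ is justified combinatorially, assembling $N_k$ is a routine summation.
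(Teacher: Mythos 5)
Your Part 1) matches the paper's (very brief) argument: read the pairing of glued sides of the $(2k+2)$-gon off as a product of disjoint transpositions on $\{1,\dots,k\}$. The problems are in Part 2), where your plan would not reproduce the stated formula.

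First, the quantity $N_k$ that the formula computes is simply the number of partial pairings (involutions, including the identity) of the $k$ labelled points $Q_1,\dots,Q_k$: the paper derives it from the recurrence $N_k=N_{k-1}+(k-1)N_{k-2}$ obtained by casing on whether $Q_1$ is free (giving $N_{k-1}$) or is joined to one of the other $k-1$ points (giving $(k-1)N_{k-2}$). In particular there is \emph{no} non-crossing constraint (crossing chords are legitimate and correspond to higher-genus atoms; with your non-crossing restriction you would get $N_4=9$ rather than the tabulated $10$), and there is \emph{no} quotient by turns or reflections in this count (the reflection fixing $Q_0$ would identify $l_{12}$ with $l_{23}$ for $k=3$ and give $N_3=3$ rather than $4$). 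The values $1,2,4,10,26,76,\dots$ in Table~1 are exactly the involution numbers, confirming that both of the constraints you propose to impose are absent. Second, your structural reading of the closed form is not what it encodes: the numbers $P_j^{(k)}$ are merely the coefficients that appear when the two-term recurrence is unfolded down to the base cases, i.e., $N_k=\big(P_0^{(k)}+\cdots+P_{j-1}^{(k)}\big)N_{k-j}+P_j^{(k)}N_{k-j-1}$, so the factor $2$ in $2\sum_j P_j^{(k)}$ is just $N_2=2$ and the isolated term $P_{k-2}^{(k)}$ is the coefficient of $N_1=1$; it is not a Burnside-type separation of a symmetry factor from self-symmetric diagrams, and an argument organized around ``not double-counting reflection-invariant diagrams'' would be proving a different (and numerically different) statement. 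To repair the proposal you need only the two-case recursion on $Q_1$ stated above, followed by the routine induction showing that iterating it produces the $P_j^{(k)}$.
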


\begin{proof}
1) Let us consider a neighborhood of a saddle critical level correspondent to critical point~$p_{0}$, being presented in the form of polygon with $(2k+2)$ vertices with glued (or unglued) sides. We construct a critical level chord diagram, using this polygon. Thus each possible gluing of sides of the polygon corresponds to some substitution $\tau$, def\/ined on the set $\{1,2,\dots,k\}$. Also it should be mentioned that if there is no side being connected with another one we get the trivial substitution.

2) A number of atoms is equal to a number of non-equivalent chord diagram. That is why the number of non-equivalent chord diagram with $k+1$ matched points and ``free'' number of chords (from~$0$ to $[\frac{k}{2}]$) we also denote by~$N_{k}$.

Hence, we can connect point $Q_1$ with another one by a chord or not. The f\/irst action can be made by $k-1$ ways and the number of atoms is equal to $N_{k-2}$. In the second case we get $k-1$ matched points and the number of possible gluings equals $N_{k-1}$. Thus, we have the recurrent formula: $N_{k}=N_{k-1}+(k-1) N_{k-2}$, $g\geq3$.

Let us use the same arguments again and again to see the regularity:
\begin{gather*}
N_{k} = N_{k-1}+(k-1) N_{k} = (1+k-1) N_{k-2}+(k-2) N_{k-3} \\
\hphantom{N_{k}}{} = (1+k-1+k-2) N_{k-3}+(1+k-1)(k-3) N_{k-4}\\
\hphantom{N_{k}}{} =(1+k-1+k-2+(1+k-1)(k-3)) N_{k-4}+(1+k-1+k-2)(k-4)N_{k-5} \\
\hphantom{N_{k}}{} = (1+k-1+k-2+(1+k-1)(k-3)+(1+k-1+k-2)(k-4)) N_{k-5}\\
\hphantom{N_{k}=}{}
+(1+k-1+k-2+(1+k-1)(k-3))(k-5) N_{k-6} = \cdots \\
\hphantom{N_{k}}{} = ((1+k-1+k-2+(1+k-1)(k-3)+(1+k-1+k-2)(k-4)\\
\hphantom{N_{k}=}{}
+(1+k-1+k-2+(1+k-1)(k-3))(k-5)+\cdots\\
\hphantom{N_{k}=}{} +(1+k-1+k-2+(1+k-1)(k-3)+\cdots\\
\hphantom{N_{k}=}{}
+(1+k-1+k-2+\cdots+ (k-(k-2)+2)(k-(k-2)+1))) N_{k-(k-2)}\\
\hphantom{N_{k}=}{}
 + ((1+k-1+k-2+(1+k-1)(k-3)+(1+k-1+k-2)(k-4)\\
\hphantom{N_{k}=}{}
+(1+k-1+k-2+(1+k-1)(k-3))(k-5)+\cdots \\
\hphantom{N_{k}=}{}
+(1+k-1+k-2+(1+k-1)(k-3)+\cdots\\
\hphantom{N_{k}=}{}
+(1+k-1+k-2+\cdots+ (k-(k-1)+3))((k-(k-1)+1))) N_{k-(k-1)}.
\end{gather*}

Consider the notations
\begin{gather*}
P_{0}^{(k)}=1, \qquad P_{1}^{(k)}=k-1, \qquad P_{2}^{(k)}=k-2,\\
P_{j}^{(k)}=\big(P_{0}^{(k)}+P_{1}^{(k)}+\cdots+P_{j-2}^{(k)}\big)(k-j)
\end{gather*} then the previous sequence of equalities can be rewritten in the form
\begin{gather*}
N_{k}= P_{0}^{(k)} N_{k-1}+P_{1}^{(k)} N_{k-1} = \big(P_{0}^{(k)}+P_{1}^{(k)}\big) N_{k-2}+P_{0}^{(k)}P_{2}^{(k)} N_{k-3}\\
\hphantom{N_{k}}{} = \big(P_{0}^{(k)}+P_{1}^{(k)}+P_{2}^{(k)}\big) N_{k-3}+P_{3}^{(k)} N_{k-4}\\
\hphantom{N_{k}}{} = \big(P_{0}^{(k)}+P_{1}^{(k)}+P_{2}^{(k)}+P_{3}^{(k)}\big) N_{k-4}+P_{4}^{(k)} N_{k-5}\\
\hphantom{N_{k}}{} = \big(P_{0}^{(k)}+P_{1}^{(k)}+P_{2}^{(k)}+P_{3}^{(k)}+P_{4}^{(k)}\big) N_{k-5}+P_{5}^{(k)} N_{k-6}= \cdots\\
\hphantom{N_{k}}{} = \big(P_{0}^{(k)}+P_{1}^{(k)}+\cdots+P_{k-2-1}^{(k)}\big) N_{k-(k-2)}+P_{k-1-1}^{(k)} N_{k-(k-1)}\\
\hphantom{N_{k}}{} = N_{2} \sum_{j=0}^{k-3}P_{j}^{(k)}+N_{1} P_{k-2}^{(k)}= 2 \sum_{j=0}^{k-3}P_{j}^{(k)}+P_{k-2}^{(k)}.\tag*{\qed}
\end{gather*}\renewcommand{\qed}{}
\end{proof}

According to the formula from Theorem~\ref{theorem3.1} we can calculate the number of atoms $A_{\tau^{(k)}}$. The results of such calculations for $k\leq20$ are presented in Table~\ref{table1}.
\begin{table}[h!]\centering
\caption{}\label{table1}\vspace{1mm}

\begin{tabular}{|c|r|c|r|c|r|c|r|}
\hline
$k$ & \multicolumn{1}{|c|}{$N_{k}$} & $k$ & \multicolumn{1}{|c|}{$N_{k}$} & $k$ & \multicolumn{1}{|c|}{$N_{k}$} & $k$ & \multicolumn{1}{|c|}{$N_{k}$} \\
\hline
1 & 1 & \hphantom{1}6 & 76 & 11 & 35696 & 16 & 46206736\\
2 & 2 & \hphantom{1}7 & 232 & 12 & 140152 & 17 & 211799312\\
3 & 4 & \hphantom{1}8 & 764 & 13 & 568504 & 18 & 997313824\\
4 & 10 & \hphantom{1}9 & 2620 & 14 & 2390480 & 19 & 4809701440\\
5 & 26 & 10 & 9496 & 15 & 10349536 & 20 & 23758664096\\
\hline
\end{tabular}
\end{table}

\section{Optimal functions}

Further we consider a smooth surface $M$ with a component of the boundary $\partial M$ and a simple smooth function~$f\in\Omega(M)$.

\begin{Definition}\label{definition4.1}
We call a function $f\in\Omega(M)$ \emph{optimal on the surface}~$M$ if it has the minimum possible number of critical points on $M$ among all functions from $\Omega(M)$.
\end{Definition}

\subsection{Optimality criterion of a function}

\begin{Theorem}\label{theorem4.1}
Let $f\in\Omega(M)$ and $M$ be a connected surface with the connected boundary, being not homeomorphic to a $2$-dimensional disk. Then the function~$f$ is optimal if and only if it has exactly three critical points.
\end{Theorem}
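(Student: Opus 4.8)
The plan is to establish the two inequalities separately: first that every $f\in\Omega(M)$ under the stated hypotheses has at least three critical points, and then that a function with exactly three critical points does exist on such $M$. For the lower bound, I would argue by contradiction using a Reeb-type (Kronrod--Reeb) analysis of the level sets. If $f$ had at most one critical point, then $f$ would be a submersion off at most one point of $\partial M$; examining the restriction $f|_{\partial M}$, which by definition of $\Omega(M)$ has its critical points among those of $f$, one sees $f|_{\partial M}$ would have at most one critical point on the circle $\partial M$, which is impossible since a smooth function on $S^1$ has at least a maximum and a minimum, hence at least two critical points. So $f$ has at least two critical points, and both lie on $\partial M$ and are critical for $f|_{\partial M}$; by the same count they are exactly the max and min of $f|_{\partial M}$, so neither is a saddle. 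If $f$ had exactly two critical points, both would be local extrema of $f$ (by Theorem~\ref{theorem2.1}-type local models one must check a boundary extremum of $f|_{\partial M}$ that is not a saddle of $f$ is a local extremum of $f$), and then the Reeb graph of $f$ would be an arc with two endpoints and no branch points, forcing $M$ to be homeomorphic to a $2$-disk. This contradicts the hypothesis, so $f$ has at least three critical points.

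For the upper bound I would construct an explicit function with three critical points on an arbitrary connected surface $M$ with connected boundary not homeomorphic to $D^2$. Such an $M$ is, up to homeomorphism, a sphere with one boundary circle and some handles/crosscaps removed — equivalently a disk with $2g$ or $g$ bands attached (the cases of orientable genus $g\ge 1$ and non-orientable genus $g\ge 1$). On the model polygon-with-identifications picture of $M$, I would put a height-type function with one minimum and one maximum on $\partial M$ (the two boundary extrema of $f|_{\partial M}$) and a single interior-of-$\partial M$ saddle point $p_0$ whose local model is $\operatorname{Re}(x+iy)^{k}$, $y\ge 0$, for the appropriate $k$ (determined by the number of bands, via the atom/chord-diagram description of Section 3); the neighborhood of the saddle critical level is exactly the $(2k+2)$-gon with glued sides realizing $M$ minus two disks. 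One checks this $f$ lies in $\Omega(M)$: its only critical points are the three listed, all on $\partial M$, and all critical for $f|_{\partial M}$.

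The main obstacle is the upper-bound construction, specifically verifying that a \emph{single} saddle suffices for \emph{every} surface in the stated class — i.e., that the required gluing pattern of the $(2k+2)$-gon can always be chosen to yield the given $M$ with exactly one boundary component and the prescribed genus, and that the function extends smoothly over the two capping regions with no new critical points while keeping $f|_{\partial M}$ with only its two extrema. This is where the atom machinery of Section~3 does the real work: one exhibits a gluing substitution $\tau^{(k)}$ whose associated surface $A_{\tau^{(k)}}$, after attaching a positive and a negative disk along the extremal arcs, is homeomorphic to $M$. I would handle the orientable and non-orientable cases by the standard band-attachment normal forms, and handle the degenerate low-genus cases (the Möbius band, the once-punctured torus or Klein bottle) by hand. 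The lower-bound half is comparatively routine once the Reeb-graph/boundary-count argument is set up, the only subtlety being to invoke the local classification Theorems~\ref{theorem2.1} and~\ref{theorem2.2} to rule out a non-extremal, non-saddle critical point.
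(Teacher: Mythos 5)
Your outline reproduces the paper's strategy almost step for step: the paper likewise splits the argument into (i) a function in $\Omega(M)$ with only two critical points forces $M\cong D^2$, so on any other surface at least three are needed, and (ii) an explicit three-critical-point function exists on every admissible $M$, built by gluing sides of the $(2k+2)$-gon neighborhood of a single boundary saddle according to a gluing substitution and capping with two half-disks on which the function extends as a height function. Your Reeb-graph route to (i) is a cosmetic variant of the paper's argument, which instead uses a gradient-like field tangent to $\partial M$ to map level lines onto the horizontal segments of $D^2$; either works.

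The one genuine gap is precisely the step you yourself flag as ``the main obstacle'': you never exhibit the gluing pattern, and that is the substantive content of the existence half. The paper does this concretely: for the orientable surface of genus $g$ it takes $k=4g+1$ (the atom of $\operatorname{Re}(x+iy)^{4g+1}$, $y\ge 0$) with the substitution $(1,4g)(2,4g+1)(3,4g-2)(4,4g-1)\cdots(2g-1,2g+2)(2g,2g+3)$, and for the non-orientable case $k=4g+3$ with an analogous substitution containing the one ``twisted'' chord $(2g+1,2g+3)$; one then verifies that the capped surface has a single boundary component and the prescribed genus. Until such a substitution is written down and checked, the claim that one saddle suffices for every $M$ in the class is asserted rather than proved. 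A second, smaller slip in your lower-bound argument: being an extremum of $f|_{\partial M}$ does not make a point a local extremum of $f$ --- the model $y^{2}-x^{2}$, $y\ge 0$, restricts to $-x^{2}$ on the boundary, a maximum, yet the point is a saddle of $f$ in the sense of Theorem~\ref{theorem2.2}. The clean argument, implicit in the paper, is that the global maximum and minimum of $f$ on the compact $M$ are critical points and are local extrema, so if there are only two critical points in total, neither is a saddle.
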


\begin{proof} Firstly, remark that a function with three critical points has one minimum point, one maximum point and the third one is a saddle critical point.

\emph{Necessity.} To prove the theorem we show will the next statements: 1) existence of a smooth function, that has three critical points on the surface~$M$, being not 2-dimensional disk; 2)~if a~function has two critical points on a~surface $N$, where $N$ is a 2-dimensional disk. Then from 1) follows that optimal function has no more than~$3$ critical points.
It means that an optimal function has exactly three critical points on the surface with boundary with the exception of 2-dimensional disk.

1) Firstly, we consider the case of oriented surface. Let $M$ be an oriented surface by genus~$g$ with one component of the boundary, which is obtained by gluing of atom $A_{\tau^{(4g+1)}}$ with the substitution{\samepage
\begin{gather*}\begin{split}
& (1,4g)(2,4g+1)(3,4g-2)(4,4g-1)\cdots(2i-1,4g-2i+2)\\
& \qquad {}\times (2i,4g-2i+3)\cdots(2g-1,2g+2)(2g,2g+3).\end{split}
\end{gather*}
The surface $M$ can be def\/ined by a~chord diagram shown in Fig.~\ref{fig7}.1.}
\begin{figure}[t]\centering
\includegraphics{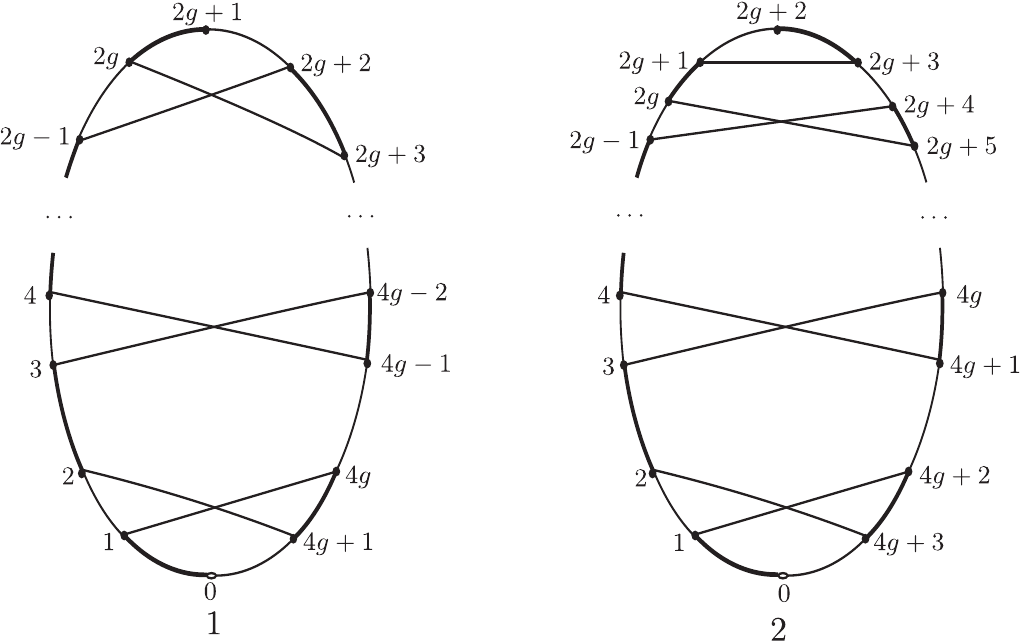}
\caption{}\label{fig7}
\end{figure}

On the other hand, the atom of critical level of function $\operatorname{Re}(x+iy)^{4g+1}$, def\/ined on some non-oriented surface by genus $2g+1$, can be represented in the form of $(4g+2)$-gon and correspondent chord diagram coincides with already described (see Fig.~\ref{fig7}.2) chord diagram.

Let us consider half-disks
\begin{gather*}
D^{2}_{+}=\big\{(x,y)\in{\mathbb R}^{2} \,|\, x^{2}+y^{2}\leq1, \,y\geq0\big\}, \qquad
D^{2}_{-}=\big\{(x,y)\in{\mathbb R}^2 \,|\, x^{2}+y^{2}\leq1,\, y\geq0\big\},
\end{gather*}
homeomorphisms
\begin{gather*}
\widehat{h_{1,2}}\colon \ [0,4g+1]\rightarrow \partial_{\pm}D^{2}_{\pm},
\end{gather*}
where
\begin{gather*}
\partial_{+}D^{2}_{+}=\big\{(x,y)\in\partial D^{2}_{+}\,|\,y>0\big \}, \qquad
\partial_{-}D^{2}_{-}=\big\{(x,y)\in\partial D^{2}_{-}\,|\,y<0\},
\end{gather*}
which map
\begin{gather*}
\widehat{h_{1,2}}(j)=\left(-1+\frac{j}{4g+1},\pm\sqrt{1-(-1+\frac{j}{4g+1})^{2}}\right),\qquad j=\overline{0,4g+1},
\end{gather*}
and embeddings
\begin{gather*}
h_{1,2}\colon \ [0,4g+1] \rightarrow \partial M,
\end{gather*}
such that
\begin{gather*}
h_{1}(0)=P_{0}, \qquad h_{1}(1)=P_{1}, \qquad h_{1}(2)=P_{4g}, \qquad h_{1}(3)=P_{4g+1}, \qquad \ldots, \\
h_{1}(4g)=P_{2g}, \qquad h_{1}(4g+1)=P_{2g+1}, \qquad h_{2}(0)=N_{0}, \qquad h_{2}(1)=N_{4g+1},\\
h_{2}(2)=N_{2}, \qquad h_{2}(3)=N_{1}, \qquad \ldots, \qquad h_{2}(4g)=N_{2g+2}, \qquad h_{2}(4g+1)=N_{2g+1}.
\end{gather*}
Then we clue the obtained atom by half-disks $D^{2}_{\pm}\cup[-1,1]\times\{0\}$ according to the map
\begin{gather*}
\widehat{h}_{1,2} \circ h_{1,2}\colon \ D^{2}_{\pm} \rightarrow \partial M.
\end{gather*}

After this gluing we get a surface with one component of the boundary. Also function $\operatorname{Re}(x+iy)^{4g+1}$ can be continued to the half-disks by its values on the boundary, being used to clue, and, if it is necessary, we can smooth the function (similarly, for example, to work \cite[Section~5]{Konner/Floid}).

A non-oriented surface and a smooth function on it with three isolated critical points can be obtained by using previous considerations with the atom $A_{\tau^{(4g+3)}}$ and the substitution
\begin{gather*}
(1,4g+2)(2,4g+3)(3,4g)(4,4g+1)\cdots(2i-1,4g-2i+4)(2i,4g-2i+5)\cdots\\
\qquad{}\times (2g-1,2g+4)(2g,2g+5)(2g+1,2g+3)
\end{gather*}
(see Fig.~\ref{fig6}.2), which coincide with the atom of saddle critical level of function $\operatorname{Re}(x+iy)^{4g+3}$, and gluing by half-disks $D^{2}_{\pm}$ according to the embeddings
\begin{gather*}
\widehat{h}_{1,2} \circ h_{1,2}\colon \ D^{2}_{\pm} \rightarrow \partial M,
 \end{gather*}
 where
 \begin{gather*}
 \widehat{h_{1,2}}\colon \ [0,4g+1]\rightarrow D^{2}_{\pm}, \\
 \widehat{h_{1,2}}(j)=\left(-1+\frac{j}{4g+3},\pm\sqrt{1-(-1+\frac{j}{4g+3})^{2}}\right), \qquad j=\overline{0,4g+3},
 \end{gather*}
 and
 \begin{gather*}
 h_{1,2}\colon \ [0,4g+3] \rightarrow \partial M,
\qquad h_{1}(0)=P_{0}, \qquad h_{1}(1)=P_{1}, \qquad h_{1}(2)=P_{4g+2}, \\
h_{1}(3)=P_{4g+3}, \qquad \ldots, \qquad h_{1}(4g+1)=P_{2g+1}, \qquad h_{1}(4g+2)=P_{2g+3},\\
 h_{1}(4g+3)=P_{2g+2}, \qquad h_{2}(0)=N_{0}, \qquad h_{2}(1)=N_{4g+3}, \qquad h_{2}(2)=N_{2},\\
 h_{2}(3)=N_{1}, \qquad \ldots, \qquad h_{2}(4g+2)=N_{2g+1}, \qquad h_{2}(4g+3)=N_{2g+2}.
\end{gather*}
Also it should be mentioned that the atom $A_{\tau^{(4g+3)}}$ contains the twisted rectangle which corresponds to chord $(2g+1,2g+3)$.

Thus, we constructed the smooth function with three isolated critical points on the boundary and also are critical points of function restriction to the boundary of the surface.

2) Let function $f$ has two critical points on a surface~$N$. Then we consider a gradient-like vector f\/ield for $f$, being tangent to $N$ (similar to~\cite{Borodzic/Nemeti/Ranicki}). After this we consider the function mapping levels of function $f$ on $N$ into line segments $y={\rm const}$ on~$D^{2}$, and trajectories of gradient-like vector f\/ield are mapped into curves $\gamma_{t}=\{c\cos{t},t(1-c)+\sin{t} \,|\, c\in[0,1]\}$, $t\in[t_{c}^{-},t_{c}^{+}]$, where~$t_{c}^{-}$,~$t_{c}^{+}$ are the smallest modulo roots of equations $t(1-c)+\sin{t}=-1$ and $t(1-c)+\sin{t}=1$ accordingly. This function def\/ines the homeomorphism between $N$ and 2-dimensional disk $D^{2}$, because only one level line and one trajectory path though each point of~$D^{2}$.

\emph{Sufficiency.} Suppose that it doesn't hold. Then there exists a function, having exactly three critical points on a~def\/ined surface but being not optimal. It means that optimal function on this surface has two critical points (because a smooth function on every compact surface has, at least, two critical points). Thus, this surface is a 2-dimensional disk because of the item~2) in necessity part of the proof. In such a way we get the contradiction. The theorem is proved.
\end{proof}

Note that in case of 2-dimensional disk an optimal function has two critical point and can be realized by a height function.

Further we suppose that optimal function has critical values equal $-1$, $0$, $1$ (we can do it, because there exists a homeomorphism of straight line, mapping three critical values of optimal function into points $-1$, $0$, $1$).

\subsection{The case of oriented surface with one component of the boundary}

\begin{Definition}\label{definition4.2}
A homeomorphism $h\colon [0,k] \rightarrow S^{1}\cup \operatorname{Int}\{l_{mn}\,|\,m,n\in\{1,\ldots,k\}\}$ will be called a \emph{full way} between free points $Q_{0}$ and $Q_{i^{*}}$ (for some $i\in\overline{1,k}$) if it satisf\/ies the conditions: $1)$~$h(0)=Q_{0}$, $h(k)=Q_{i^{*}}$; $2)$~$\forall\, t\in\{1,2,\ldots,k-1\}$: $h(t)\in\{Q_{1},Q_{2},\ldots,Q_{i^{*}-1},Q_{i^{*}+1},\ldots,Q_{k}\}$; $3)$~$\forall\, j\in\{1,2,\ldots,k-1\}$ $\forall\, t\in(j,j+1)$: $f(t)$ belongs either to the interior of arc, or to the interior of chord; $4)$~the direction can not be changed during the moving on f\/ixed chord diagram.
\end{Definition}

\begin{Theorem}\label{theorem4.2}
A chord diagram of saddle critical level of optimal function on oriented surface with one component of the boundary satisfies the following conditions:
\begin{itemize}\itemsep=0pt
\item[$1)$] every chord divides the circle into two arcs, each of which contains an even number of matched points;
\item[$2)$] the chord diagram has $k+1=4n+2$ matched points $($for some integer~$n$, $n\geq1)$ and there exist exactly two free points, one of which is $Q_{0}$;
\item[$3)$] there exist two full ways between free points.
\end{itemize}
\end{Theorem}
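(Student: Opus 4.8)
The plan is to analyze the critical level of the unique saddle point $p_0$ of an optimal function $f$ on an oriented surface $M$ with one boundary component and genus $g\ge 1$ (so $M$ is not a disk, in accordance with Theorem~\ref{theorem4.1}), and to read off the three asserted properties from the combinatorics of the associated chord diagram together with the constraints that $M$ be orientable, connected, with exactly one boundary circle. I would set up notation as before: the neighborhood of the saddle critical level is a $(2k+2)$-gon with $2l$ sides glued in pairs according to the gluing substitution $\tau^{(k)}$, producing the chord diagram with $k+1$ matched points $Q_0,\dots,Q_k$ and $l$ chords; by Lemma~\ref{lemma3.1} the number of free points is $k-2l+1$.

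First I would establish item~$(1)$. A chord $l_{ij}$ comes from identifying two sides of the polygon; in the quotient surface these two sides become a single edge. Orientability forces the identification to be \emph{orientation-reversing} on the polygon boundary, i.e., the chord must connect two arcs that, in the cyclic order, are separated so that the two identified edges are traversed in opposite directions relative to the polygon — a ``twisted'' identification (like that producing a Möbius band) would make $M$ non-orientable. Translating this into the circle picture: counting matched points between the endpoints of a chord, the orientation-reversing gluing is exactly the one for which both arcs cut off by the chord contain an even number of matched points (the standard ``orientable chord diagram'' / linking parity condition, cf.\ \cite{Kadubovskyi2015, Stoimenov2000}). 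So item~$(1)$ is the orientability obstruction expressed in the diagram. I expect this translation to be the main obstacle: one must carefully match the parity of matched points along an arc with the orientation behaviour of the corresponding band in the handle decomposition, and verify that $M$ (after gluing the half-disks of the minimum and maximum) is orientable if and only if every chord splits the point-set into two even halves.

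For item~$(2)$, I would use the Euler characteristic and the boundary count. The neighborhood of the critical level, built from the $(2k+2)$-gon with $l$ chord-gluings, is a surface with some number $b$ of boundary circles; attaching a disk (or half-disk) at the minimum and one at the maximum must yield a connected oriented genus-$g$ surface with exactly one boundary component. Computing $\chi$ of the polygon-with-gluings ($\chi = 1 - l$, roughly, after accounting for vertex identifications — here I would be careful, since vertices get identified in groups) and matching $\chi(M) = 2 - 2g - 1 = 1 - 2g$ against $2 - (\text{critical points}) = 2 - 3 = -1$ forces $g$ and hence constrains $k$. The ``one boundary component'' condition, applied to the boundary of the critical-level neighborhood, combined with the fact that exactly the two free points other than $Q_0$ (respectively $Q_0$ itself) lie on the boundary of $M$, forces the number of free points to be exactly $2$: one is $Q_0$ (the image of the saddle on $\partial M$) and one more, say $Q_{i^\*}$, with no further free points allowed or $\partial M$ would be disconnected. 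Then $k - 2l + 1 = 2$ gives $l = k/2$ and in particular $k$ is even; pushing the orientability parity of item~$(1)$ a step further (each chord must ``see'' even arcs, and the free points $Q_0, Q_{i^\*}$ break the circle into two halves that must each accommodate a maximal non-crossing even matching) yields $k+1 \equiv 2 \pmod 4$, i.e.\ $k+1 = 4n+2$ for some $n\ge 1$ (with $n\ge 1$ because $g\ge 1$).

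Finally, item~$(3)$ follows from items~$(1)$--$(2)$ and Definition~\ref{definition4.2}. With exactly two free points $Q_0$ and $Q_{i^\*}$ on $\partial M$, and the chord diagram encoding how the boundary of the critical-level neighborhood closes up, the boundary of $M$ restricted to this neighborhood traces out a path from $Q_0$ to $Q_{i^\*}$ along arcs and chord interiors (never reversing direction, by the ``tangent-to-the-boundary'' field construction used in Theorems~\ref{theorem2.1} and~\ref{theorem2.2}); since $\partial M$ is a single circle passing through both $Q_0$ and $Q_{i^\*}$, it is cut by these two points into exactly two arcs, each of which is a full way in the sense of Definition~\ref{definition4.2}. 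Thus there are exactly two full ways between the free points. I would present items~$(1)$ and~$(2)$ as the substantive content — the orientability/parity bookkeeping and the Euler-characteristic count — and item~$(3)$ as a short consequence of the boundary being a single circle through the two free points.
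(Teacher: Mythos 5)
Your treatment of items~$(1)$ and~$(3)$ matches the paper's in substance: orientability is exactly the no-M\"obius-band condition on each glued pair of sides, which is the even-arc parity condition, and the two full ways are the two arcs into which the two free points cut the single boundary circle of $M$ (the paper phrases this contrapositively: no full way would force $\partial M$ to be disconnected). The argument that there are exactly two free points is also close in spirit, though the paper derives it from optimality (a third free point would force at least four half-disks, hence at least five critical points) rather than directly from connectivity of $\partial M$.

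Item~$(2)$ is where your proposal has a genuine gap. First, an arithmetic slip: with exactly two free points, Lemma~\ref{lemma3.1} gives $k-2l+1=2$, i.e.\ $k=2l+1$ is \emph{odd} (so $k+1$ is even), not ``$l=k/2$ and $k$ even'' as you wrote. Second, and more seriously, the passage from ``$k+1$ even'' to ``$k+1\equiv 2 \pmod 4$'' is exactly the claim that the number of chords $l$ is even, and neither your Euler-characteristic matching nor the appeal to ``maximal non-crossing even matchings'' delivers it. The $\chi$ count as stated is wrong: the saddle here is degenerate, so $\chi(M)$ is not $2-(\text{number of critical points})$; setting $\chi(M)=-1$ would force $g=1$, whereas the theorem holds for all $g\geq 1$ (indeed $k+1=4g+2$ grows with $g$). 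And the parity condition of item~$(1)$ together with two free points does not by itself force $l$ even: for $k=3$ the single chord $l_{12}$ satisfies the even-arc condition and leaves two free points, yet $l=1$. What rules this out is the paper's separate argument: each chord gluing is the attachment of a $1$-handle, which changes the number of boundary components of the partial surface by exactly $\pm1$; starting from a half-disk with one boundary circle and demanding that the completed surface again have one boundary component forces an even number of attachments, i.e.\ $l=2n$, whence $k+1=2l+2=4n+2$. You would need to add this (or an equivalent) count to close item~$(2)$.
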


\begin{proof}
1) The f\/irst item means that we can connect only such matched points of chord diagram which have odd and even numbers. It follows from the orientation of the surface, because in another case corresponding atom includes the M\"{o}bius strip.

2) According to the previous notations, the number of marked points is equal to $k+1$ and the number of chords is equal to~$l$. Let us show that: a)~$k$~is odd ($\Leftrightarrow k+1$ is even); b)~the chord diagram includes two free points; c)~$l=2n$ for some integer nonnegative~$n$. Then from~b) and~c) follows that $k\equiv1$ $({\rm mod}~4)$ (it holds if and only if $k+1=4n+2$).

a) The function $f$ changes the sign when it passes thought critical point because $f(p_{0})=0$. It means that arcs, being adjacent with point $Q_0$, are of deferent color. Thus, the number of matched points is even, because the color of arcs alternates by passing the circle~$S^1$ of chord diagram.

b) Suppose that it doesn't hold. Then the chord diagram has less than $2n$ chords and, at least, 3 free points. It means that appropriate~3 sides of atom belong to the boundary because after extend of the neighborhood of critical level these~3 sides don't clue with another sides. That is why, at least, 4 half-disks~$D^{2}_{\pm}$ become glued to the surface with further increase of the neighborhood. The last sentence infers that this function has at least 5 critical points and as a~result isn't optimal.

Thus, chord diagram of optimal function on the oriented surface with one component of the boundary has 2 free points, one of which $Q_{0}$ corresponds to a critical point.

c) Gluing of each pair of matched points (with odd and even number) is equivalent to attaching of 1-handle (rectangle). Also this gluing increases the number of components of the boundary by~1 (if 1-handle is glued to one component of the boundary) or decreases by~1 (if 1-handle is glued to two components of the boundary) and at the same time the genus of the surface becomes increased after each gluing.

We start from a half-disk $D^{2}_{+}=\{(x,y)\in\mathbb{R}^{2}\,|\, x^{2}+y^{2}\leq1,\, y\geq0\}$ (without glued 1-handles), having one component of the boundary and every glued 1-handle changes the number of components of the boundary by~1. It means that we can get the surface with one component of the boundary only in case of even number of gluings.

3) Let us suppose that there does not exist a full way on a def\/ined chord diagram. Then we get, at least, two components of the boundary of the surface, which contradicts to the conditions of the theorem.
\end{proof}

\begin{Corollary}\label{corollary4.1}
A gluing substitution doesn't include the cycle $(i,j+1)(i+1,j)$ for all possible $i,j\in\{\overline{0,k}\}$.
\end{Corollary}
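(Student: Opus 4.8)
The plan is to argue by contradiction from Theorem~\ref{theorem4.2}, using the dictionary between a chord diagram, its gluing substitution, and the surface reconstructed by gluing the sides of the corresponding $(2k+2)$-gon. Suppose the gluing substitution of the saddle critical level of an optimal function on an oriented surface with connected boundary contained both cycles $(i,j+1)$ and $(i+1,j)$. First note that we may assume $i,i+1,j,j+1$ to be distinct matched points different from $Q_0$: by part~2) of Theorem~\ref{theorem4.2} the point $Q_0$ is free, hence cannot be an endpoint of a chord, so the cases involving the index $0$ are vacuous. In the diagram the chords $l_{i,j+1}$ and $l_{i+1,j}$ are then disjoint and nested, and ``parallel'': their endpoints are the neighbouring pairs $Q_i,Q_{i+1}$ and $Q_j,Q_{j+1}$, so the arcs $Q_iQ_{i+1}$ and $Q_jQ_{j+1}$ of $S^1$ carry no other matched point. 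Part~1) of Theorem~\ref{theorem4.2} then only forces the parity $j\equiv i\pmod{2}$, which is not yet a contradiction; the real content lies deeper.

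The key step is to read off what these two parallel identifications do to the $(2k+2)$-gon. Carrying out the gluings prescribed by $l_{i,j+1}$ and $l_{i+1,j}$ alone zips the strip of the polygon lying between the gradient side at $Q_i$ and the one at $Q_{j+1}$ onto itself, and, after tracing the boundary walk of the surface obtained once all identifications are performed, the two arcs $Q_iQ_{i+1}$ and $Q_jQ_{j+1}$ close up into a cycle of the boundary walk that passes through neither free point. Hence the surface acquires an extra boundary component beyond the one containing $Q_0$ and $Q_{i^*}$; equivalently, at the level of the diagram, such a configuration admits no two distinct full ways between the free points that together sweep the whole circle, so part~3) of Theorem~\ref{theorem4.2} fails. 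Either way we obtain a contradiction: an extra boundary component forces an extra half-disk $D^2_{\pm}$ to be glued when realising the function, producing at least four critical points, contrary to the necessity part of Theorem~\ref{theorem4.1}; alternatively one observes directly that the parallel pair is reducible, since deleting $l_{i,j+1}$, $l_{i+1,j}$ and the four matched points $Q_i,Q_{i+1},Q_j,Q_{j+1}$ yields a chord diagram of a homeomorphic surface, with $N_{\rm f.p}$ unchanged by Lemma~\ref{lemma3.1} but with strictly fewer matched points, so the original function was not optimal.

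I expect the main obstacle to be the boundary-walk computation of the second step: one must keep exact track of the cyclic order of the vertices $P_m$ and $N_m$ around the $(2k+2)$-gon, of which of the two gradient sides incident to each matched point is glued by which chord, and of the colouring conventions of Definition~\ref{definition3.4}, so as to be certain that the two parallel chords genuinely isolate a self-identified strip rather than interlocking with the remaining chords, and that no orientation-admissible way of performing the gluings (part~1) of Theorem~\ref{theorem4.2}) evades the conclusion. Once the strip and its separate boundary cycle are identified, counting its effect on the number of boundary components, hence on the number of glued half-disks and therefore on the number of critical points, is routine and finishes the proof.
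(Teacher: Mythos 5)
Your main line of argument --- that the two chords $l_{i,j+1}$ and $l_{i+1,j}$ together with the adjacent arcs $\widehat{i,i+1}$ and $\widehat{j,j+1}$ close up into an isolated cycle, so that no full way between the free points can exist and condition 3) of Theorem~\ref{theorem4.2} is violated --- is exactly the paper's proof, which consists of precisely this one observation stated at the same level of detail. The additional boundary-component and ``reducibility'' variants you sketch are not needed (and the claim that deleting the parallel pair yields a chord diagram of a homeomorphic surface is doubtful, since that pair is what creates the extra boundary circle), but your core argument is correct and coincides with the paper's.
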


\begin{proof}
If the gluing substitution includes the cycle $(i,j+1)(i+1,j)$ for some $i$, $j$, then chord diagram doesn't have a~full way, because every path from point $Q_0$ to arbitrary point $Q_p$, $p\in\{0,k\}{\setminus}\{i,i+1,j,j+1\}$ doesn't pass through the arcs $\widehat{i,i+1}$, $\widehat{j,j+1}$ and chords $l_{i,j+1}$, $l_{i+1,j}$.
\end{proof}

\begin{Corollary}\label{corollary4.2}
Every free point of a chord diagram of saddle critical level, except of~$Q_{0}$, has an odd number.
\end{Corollary}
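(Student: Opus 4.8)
The plan is to obtain the statement by a short parity count built on the structure already established in Theorem~\ref{theorem4.2}. First I would recall two facts. Since the gluing substitution acts on $\{1,2,\ldots,k\}$, the point $Q_0$ is never an endpoint of a chord, hence $Q_0$ is always a free point; and by Theorem~\ref{theorem4.2}(2) the chord diagram of an optimal function on an oriented surface with one boundary component has $k+1=4n+2$ matched points and exactly two free points, one of which is $Q_0$. In particular $k=4n+1$ is odd, so among $Q_0,Q_1,\ldots,Q_k$ there are exactly $2n+1$ points carrying an even number (namely $0,2,\ldots,4n$) and $2n+1$ carrying an odd number (namely $1,3,\ldots,4n+1$).

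Next I would use the orientability constraint. By Theorem~\ref{theorem4.2}(1) every chord cuts the circle into two arcs each containing an even number of matched points, which forces the two endpoints of a chord to have numbers of opposite parity; thus each chord joins one even-numbered matched point to one odd-numbered one. The number of chords is $N_{\rm ch}=2n$ --- this follows from Lemma~\ref{lemma3.1} applied with $N_{\rm f.p}=2$ and $k=4n+1$. Hence the $2n$ chords consume exactly $2n$ of the even-numbered matched points and exactly $2n$ of the odd-numbered ones, leaving precisely one even-numbered and one odd-numbered matched point free.

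Finally I would conclude: $Q_0$, whose number $0$ is even, is free, so it must be the unique free even-numbered point; therefore the remaining free point --- the one distinct from $Q_0$ --- necessarily has an odd number. The whole argument is bookkeeping, so I do not expect a genuine obstacle; the only place to be careful is to treat $Q_0$ consistently as an even-numbered point when partitioning the matched points into the $2n$ chorded evens and the single free even, so that the counts line up.
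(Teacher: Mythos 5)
Your proof is correct and follows essentially the same parity-counting route as the paper: among $Q_1,\ldots,Q_{4n+1}$ there are $2n$ even-numbered and $2n+1$ odd-numbered points, and since each chord joins points of opposite parity, the single free point other than $Q_0$ must be odd. You simply make explicit two steps the paper leaves implicit, namely that condition~1) of Theorem~\ref{theorem4.2} forces opposite-parity endpoints and that Lemma~\ref{lemma3.1} gives $N_{\rm ch}=2n$.
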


\begin{proof}
From 2) follows that, with the exception of marked point $Q_{0}$, we have matched points $Q_{1},\ldots,Q_{4n+1}$, among which {\it exactly} $2n$ have an even number and $2n+1$ have an odd number. Then another free point has an odd number (the second item of Theorem~\ref{theorem4.2}).
\end{proof}

\begin{Theorem}[criterion of topological equivalence] \label{theorem4.3}
Optimal functions are topologically equi\-va\-lent if and only if their chord diagrams of saddle critical levels are equivalent.
\end{Theorem}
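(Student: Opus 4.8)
The plan is to handle the two implications separately, using Theorem~\ref{theorem4.1}: an optimal $f$ has exactly three critical points --- a minimum $p_{\min}$, a maximum $p_{\max}$ and a saddle $p_{0}$ --- with critical values normalised to $f(p_{\min})=-1$, $f(p_{0})=0$, $f(p_{\max})=1$.

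\emph{Necessity.} Suppose $h_{2}\circ f=g\circ h_{1}$ with $h_{1}\colon M\to N$ a homeomorphism and $h_{2}\colon\mathbb{R}\to\mathbb{R}$ orientation-preserving. Then $h_{2}$ carries the ordered triple of critical values of $f$ to that of $g$, so $h_{1}$ maps $f^{-1}(0)$ onto $g^{-1}(0)$ and, for small $\varepsilon$, maps the saddle-level neighbourhood $f^{-1}([-\varepsilon,\varepsilon])$ onto $g^{-1}([-\varepsilon',\varepsilon'])$, taking level components to level components and positive (negative) sectors to positive (negative) sectors since $h_{2}$ preserves the orientation of $\mathbb{R}$. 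Reading off the construction of Section~3, $h_{1}$ thus carries the $(2k+2)$-gon of the $f$-atom together with its side identifications to those of the $g$-atom, and the induced bijection of the boundary circles preserves the enumerated matched points, the chords and the arc colouring up to at most a reversal of the orientation of $\partial M$; this is exactly an equivalence of chord diagrams in the sense of Definition~\ref{definition3.5}.

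\emph{Sufficiency.} Assume the chord diagrams are equivalent. A chord diagram determines its $f$-atom (the remark after Definition~\ref{definition3.4}), so $U_{f}:=f^{-1}([-\varepsilon_{1},\varepsilon_{1}])$ and $U_{g}:=g^{-1}([-\varepsilon_{2},\varepsilon_{2}])$ are layer equipped equivalent; the flow technique used in the proofs of Theorems~\ref{theorem2.1} and~\ref{theorem2.2} (a gradient-like field tangent to the boundary) upgrades this to a homeomorphism $\Phi\colon U_{f}\to U_{g}$ with $\psi\circ f=g\circ\Phi$ for an increasing homeomorphism $\psi$ of a neighbourhood of $0$, which we normalise by $\psi(\pm\varepsilon_{1})=\pm\varepsilon_{2}$. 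Next I describe $M$ outside $U_{f}$. Since $f|_{\partial M}$ is a smooth function with isolated critical points on the circle $\partial M$, it has equally many local maxima and minima; as its critical values are $-1,0,1$, it has a single local minimum $p_{\min}$, a single local maximum $p_{\max}$, and $p_{0}$ is an inflection point (so, by Theorem~\ref{theorem2.2}, the index $k$ at $p_{0}$ is odd, in accordance with Theorem~\ref{theorem4.2}). Hence for $0<\varepsilon_{1}<1$ the set $\{\,x\in\partial M : f(x)\le-\varepsilon_{1}\,\}$ is a single arc about $p_{\min}$, and $-\varepsilon_{1}$ is a regular value of $f|_{\partial M}$, so $f^{-1}(-\varepsilon_{1})\cap\partial M$ consists of two points. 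Flowing the gradient-like field downwards retracts $M_{-}:=f^{-1}([-1,-\varepsilon_{1}])$ onto $p_{\min}$, so $M_{-}$ is a disc; its boundary circle is the union of the arc $\partial M\cap M_{-}$ and the set $f^{-1}(-\varepsilon_{1})$, which is therefore a single arc joining the two boundary points just found. Since $f$ has no critical value in $(-1,-\varepsilon_{1})$ and is standard near $p_{\min}$ by Theorem~\ref{theorem2.1}, $(M_{-},f)$ is the half-disc with its height function; the same description holds for $M_{+}:=f^{-1}([\varepsilon_{1},1])$ and for the corresponding pieces of $N$.

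It remains to glue. The homeomorphism $\Phi$ restricts on the arc $f^{-1}(-\varepsilon_{1})$ to a homeomorphism onto $g^{-1}(-\varepsilon_{2})$ carrying the two endpoints on $\partial M$ to the two endpoints on $\partial N$. Because $M_{-}$ and $N_{-}$ are standard half-discs with these arcs as top edges, this boundary datum extends --- by coning toward $p_{\min}$ along gradient-like trajectories and reparametrising $f$-values by an increasing $\rho_{-}\colon[-1,-\varepsilon_{1}]\to[-1,-\varepsilon_{2}]$ with $\rho_{-}(-1)=-1$, $\rho_{-}(-\varepsilon_{1})=-\varepsilon_{2}$ --- to a homeomorphism $\Phi_{-}\colon M_{-}\to N_{-}$ with $\rho_{-}\circ f=g\circ\Phi_{-}$; symmetrically one obtains $\Phi_{+}\colon M_{+}\to N_{+}$. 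Since $\Phi_{-},\Phi,\Phi_{+}$ agree on the level arcs $f^{-1}(-\varepsilon_{1})$ and $f^{-1}(\varepsilon_{1})$, they glue to a homeomorphism $h_{1}\colon M\to N$, and $\rho_{-},\psi,\rho_{+}$ glue to an orientation-preserving $h_{2}\colon\mathbb{R}\to\mathbb{R}$ with $h_{2}\circ f=g\circ h_{1}$, proving topological equivalence.

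\emph{Main obstacle.} The delicate step is, in the sufficiency part, passing from layer equipped equivalence of the chord-diagram atoms to an honest conjugacy $\Phi$ on $U_{f}$ (one must choose the gradient-like field with care) and then arranging the extensions $\Phi_{\pm}$ so that they coincide with $\Phi$ on $f^{-1}(\pm\varepsilon_{1})$ and so that $\rho_{-},\psi,\rho_{+}$ patch into a single homeomorphism of $\mathbb{R}$. Once $M$ is cut along the two regular levels $f^{-1}(\pm\varepsilon_{1})$ and each of the three pieces is recognised as a standard half-disc or a chord-diagram atom, the remaining bookkeeping --- in particular tracking the single boundary arc of $\partial M$ through $p_{\min}$ and $p_{\max}$ --- is routine.
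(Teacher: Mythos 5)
Your proof is correct and follows essentially the same route as the paper's: necessity by reading the chord diagram off the conjugating pair $(h_1,h_2)$, and sufficiency by building a homeomorphism of the saddle-level atoms from the chord-diagram equivalence and then extending it over the sublevel and superlevel pieces along gradient-like trajectories. Your write-up is in fact more explicit than the paper's about why the two caps $M_{\pm}$ are standard half-discs (via the count of extrema of $f|_{\partial M}$ and the absence of intermediate critical values), but the underlying argument is the same.
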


\begin{proof}
\emph{Necessity.} The topological equivalence of optimal functions induces the equivalence of chord diagrams, because of the construction of chord diagrams. Then the equivalence of their chord diagram, what follows from topological equivalence of optimal functions.

\emph{Sufficiency.} Suppose that chord diagrams of optimal functions~$f$ and~$h$ are equivalent. In what follows the existence of homeomorphisms from level lines of functions $f$ and $h$ to level lines of function $\operatorname{Re}(x+iy)^{k}$ (Theorem~\ref{theorem4.1}). Let $\varphi$ and $\psi$ be these homeomorphisms. Then the homeomorphism from some neighborhood of a saddle critical point of function $f$ to correspondent neighborhood of $h$ can be def\/ined by the formula $\varphi^{-1}\circ\psi$.

Let us consider the atom of critical level without a critical point. Then a~chord or a free point (with the exception of $Q_0$) conforms to each connected component. The equivalence of chord diagrams def\/ines the bijection between these level components, that is why, we can continue the homeomorphism between neighborhoods of critical point of functions $f$ and $g$ to homeomorphism of neighborhood of critical level of these functions.

Thus, we continue this homeomorphism to the chords. And we have the point either on chord or on the boundary of $(8n+4)$-gon for each trajectory of gradient-like vector f\/ield. It means that we get the bijection between these trajectories. Thus, the homeomorphism between appropriate trajectories can be constructed and it satisf\/ies the condition that the points with one and the same values map one into another (in other words, this homeomorphism preserves the value of the function). We obtain the homeomorphism, because on every trajectory there exits a single point with a f\/ixed value from $(0,1)$. These homeomorphisms def\/ine the homeomorphism of the surface and it is a~topological equivalence.
\end{proof}

\begin{Theorem}[realization]\label{theorem4.4}
If a chord diagram satisfies the conditions $1)$--$3)$ of Theorem~{\rm \ref{theorem4.2}}, then there exists an optimal function, chord diagram of saddle critical level of which coincides with the first one.
\end{Theorem}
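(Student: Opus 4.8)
The plan is to reverse the construction used in the necessity part of Theorem~\ref{theorem4.1}: instead of reading a chord diagram off a given surface, I build a surface and a function out of the given chord diagram $D$. By the correspondence of Theorem~\ref{theorem3.1}, the diagram $D$ --- which by condition~2) has $k+1=4n+2$ enumerated matched points $Q_0,\dots,Q_k$ and, via Lemma~\ref{lemma3.1}, exactly $2n$ chords and two free points $Q_0,Q_{i^{*}}$ --- determines an atom $A_{\tau^{(k)}}$: take the $(2k+2)$-gon whose sides carry the shaded/unshaded coloration prescribed by element~(3) of $D$, and glue the pairs of sides named by the $2n$ chords. Condition~1) (every chord joins a point of odd number with one of even number) is precisely what prevents any of these gluings from inserting a M\"obius band, so the resulting compact surface $P$ with boundary is orientable. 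By Theorem~\ref{theorem2.2} and the discussion preceding Fig.~\ref{fig5}, $P$ is a neighbourhood of a saddle critical level of the local model $g(x,y)=\operatorname{Re}(x+iy)^{k}$, $y\ge 0$ (with $k=4n+1$): it carries the germ of $g$ at the saddle $p_{0}$, extended over $P$ by trajectories of a gradient-like field, with positive and negative sectors matching the coloration of $D$.

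Next I would cap off $P$. The boundary $\partial P$ splits into the two arcs adjacent to the free points $Q_0$ and $Q_{i^{*}}$, which lie on $\partial M$ and along which the gradient-like field is tangent, and the remaining arcs, which lie on the levels $f^{-1}(-\varepsilon)$ and $f^{-1}(+\varepsilon)$. I would show, from conditions~1)--3) together with the coloration rule, that the latter consist of exactly one arc on $\{f=-\varepsilon\}$ and one arc on $\{f=+\varepsilon\}$; to the negative one I glue a half-disk $D^{2}_{-}$ carrying a non-degenerate local minimum and to the positive one a half-disk $D^{2}_{+}$ carrying a non-degenerate local maximum, in each case matching the $f$-values along the gluing locus and then smoothing $f$ in a collar of the gluing circle exactly as in Theorem~\ref{theorem4.1} and \cite[Section~5]{Konner/Floid}, so that no extra critical points are introduced. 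The outcome is a compact oriented surface $M$ and a function $f$ with precisely three critical points --- the saddle $p_{0}$, one minimum and one maximum --- all lying on $\partial M$ and being isolated critical points of $f|_{\partial M}$; hence $f\in\Omega(M)$.

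It then remains to check optimality and that $D$ is realized. The surface $M$ is connected and oriented (it is the connected atom $P$ with two disks attached), it is not homeomorphic to $D^{2}$ since $k=4n+1\ge5$, and it has exactly one boundary component: the two full ways of condition~3) concatenate at $Q_0$ and $Q_{i^{*}}$ into a single closed curve, which is $\partial M$ --- this is the converse of the argument used in part~3) of the proof of Theorem~\ref{theorem4.2}. By Theorem~\ref{theorem4.1}, any function from $\Omega(M)$ on such a surface is optimal exactly when it has three critical points, so the $f$ just built is optimal; and by construction the atom of its saddle critical level is $A_{\tau^{(k)}}$, so the chord diagram of that level coincides with $D$ in the sense of Definition~\ref{definition3.5}.

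The step I expect to be genuinely delicate is the claim in the capping-off paragraph: that $\partial P$, with the two $\partial M$-arcs removed, is exactly one negative level arc and one positive level arc, and that the two half-disk attachments followed by smoothing yield a surface with a single boundary component and a function with no critical points beyond the prescribed three. This forces one to track how the two full ways interleave with the $2n$ chords and with the alternating coloration around $S^{1}$, using the orientability input of condition~1) and the count $k+1=4n+2$ of condition~2) simultaneously. Everything else is essentially a transcription of constructions already carried out in Theorems~\ref{theorem2.2}, \ref{theorem4.1} and~\ref{theorem4.2}.
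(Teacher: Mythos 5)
Your proposal is correct and follows essentially the same route as the paper: build the $(4n+2)$-gon neighbourhood of the critical level of $\operatorname{Re}(x+iy)^{4n+1}$, $y\ge 0$, glue rectangles (1-handles) according to the chords of the diagram, cap off with two half-disks carrying the extremum points as in Theorem~\ref{theorem4.1}, and smooth \`a la \cite{Konner/Floid}. The step you flag as delicate (that the capping produces exactly one minimum, one maximum and a single boundary component, so that optimality follows from Theorem~\ref{theorem4.1}) is left equally implicit in the paper's own proof, which simply refers back to the gluing of Theorem~\ref{theorem4.1}; your explicit appeal to conditions 2) and 3) to justify it is a reasonable completion rather than a departure.
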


\begin{proof}
A chord diagram can be def\/ined by a number $k$ and by a gluing substitution. That is why, we consider the function $f(x,y)=\operatorname{Re}(x+iy)^{4n+1}$, $y\geq0$, its isolated critical point $p_{0}=(0,0)$ and $\varepsilon$-neighborhood of critical level $f(p_{0})=0$ (for some $\varepsilon>0$), which is presented in the form of $(4n+2)$-gon with colored sections (see Fig.~\ref{fig5}.1). Let us suppose that sides of this polygon are line segments and have the length which equals~$2\varepsilon$. Then enumerate the sides of this polygon from~$S_{0}$ up to~$S_{4n+1}$ as it is shown in Fig.~\ref{fig8}.
\begin{figure}[t]\centering
\includegraphics{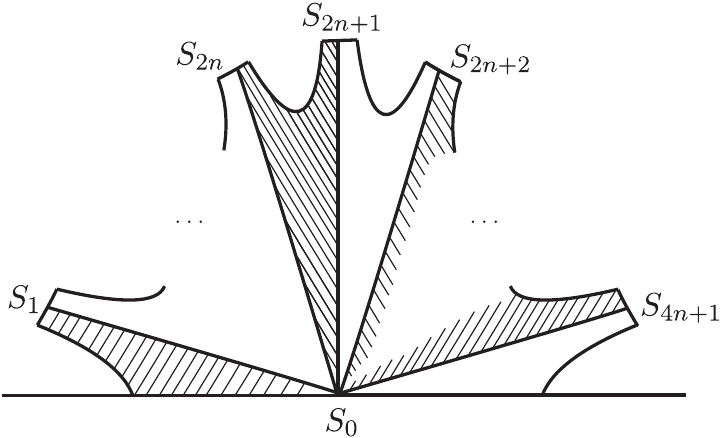}
\caption{}\label{fig8}
\end{figure}

After this we glue the rectangle $[0,1]\times[-\varepsilon,\varepsilon]$ to sides $S_{i}$ and $S_{j}$ (according to the gluing substitution) by the following image: if gluing substitution includes the cycle $(i,j)$, then
\begin{gather*}
p\colon \ \{0,1\}\times[-\varepsilon,\varepsilon]\rightarrow\{S_{i},S_{j}\}, \qquad p((0,-\varepsilon))=Q_{2i}, \qquad p((0,\varepsilon))=Q_{2i-1},\\
p((1,-\varepsilon))=Q_{2j-1}, \qquad p((1,-\varepsilon))=Q_{2j-1}, \qquad p((1,\varepsilon))=Q_{2j}.
\end{gather*}
Def\/ine the function $f(x,y)$ on glued rectangle by its second coordinate \smash{$f(x,y)=y$}. As a result, we get an atom of saddle critical level. Whereafter we glue this atom by half-disks
\begin{gather*}
D_{+}=\big\{(x,y)\in{\mathbb R}^{2}\, |\, x^{2}+(y-\varepsilon)^{2}\leq(1-\varepsilon)^{2},\, y\geq\varepsilon\big\},\\
 D_{-}=\big\{(x,y)\in{\mathbb R}^{2}\, |\, x^{2}+(y+\varepsilon)^{2}\leq(1-\varepsilon)^{2},\, y\leq-\varepsilon\big\},
\end{gather*}
as it was done in Theorem~\ref{theorem4.1} in oriented case and continue function $f$ on this half-disks by the formula \smash{$f(x,y)=y$}. If it is necessary we can smooth the obtained function (similar to work~\cite{Konner/Floid}). Thus, we get the surface and the smooth function which is def\/ined on it, such that chord diagram of saddle critical level of this function coincides with the initial chord diagram.
\end{proof}

\subsection{Examples of calculations}

From Theorems~\ref{theorem4.2}, \ref{theorem4.3} and~\ref{theorem4.4} follows that every chord diagram def\/ines (up to topological equivalence) an optimal function on oriented surface with one component of the boundary if and only it satisf\/ies the conditions 1)--3) of Theorem~\ref{theorem4.2}.

Using this properties of chord diagrams, there was calculated the number of optimal layer (topological) non-equivalent functions def\/ined on oriented surface with one component of the boundary in cases:
\begin{itemize}\itemsep=0pt
\item[$1)$] surface by genus 1: 1 (1):
\begin{center}
\includegraphics{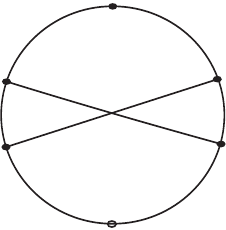}
\end{center}

\item[$2)$] surface by genus 2: 5 (8):{\samepage
\begin{center}
\includegraphics{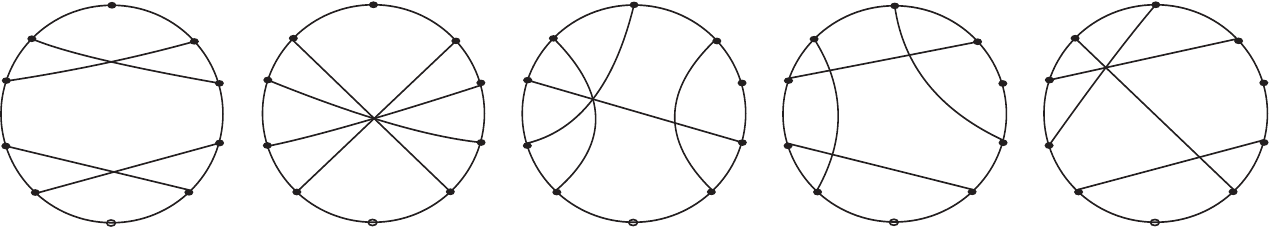}
\end{center}}

\item[$3)$] surface by genus 3: 94 (182).
\end{itemize}

Also in case of oriented surface by genus~3 we used chord diagrams with 7 chords of closed surface (chords 1--25), which are described in detail in paper~\cite{Kadubovskyi}.
\begin{center}
\includegraphics{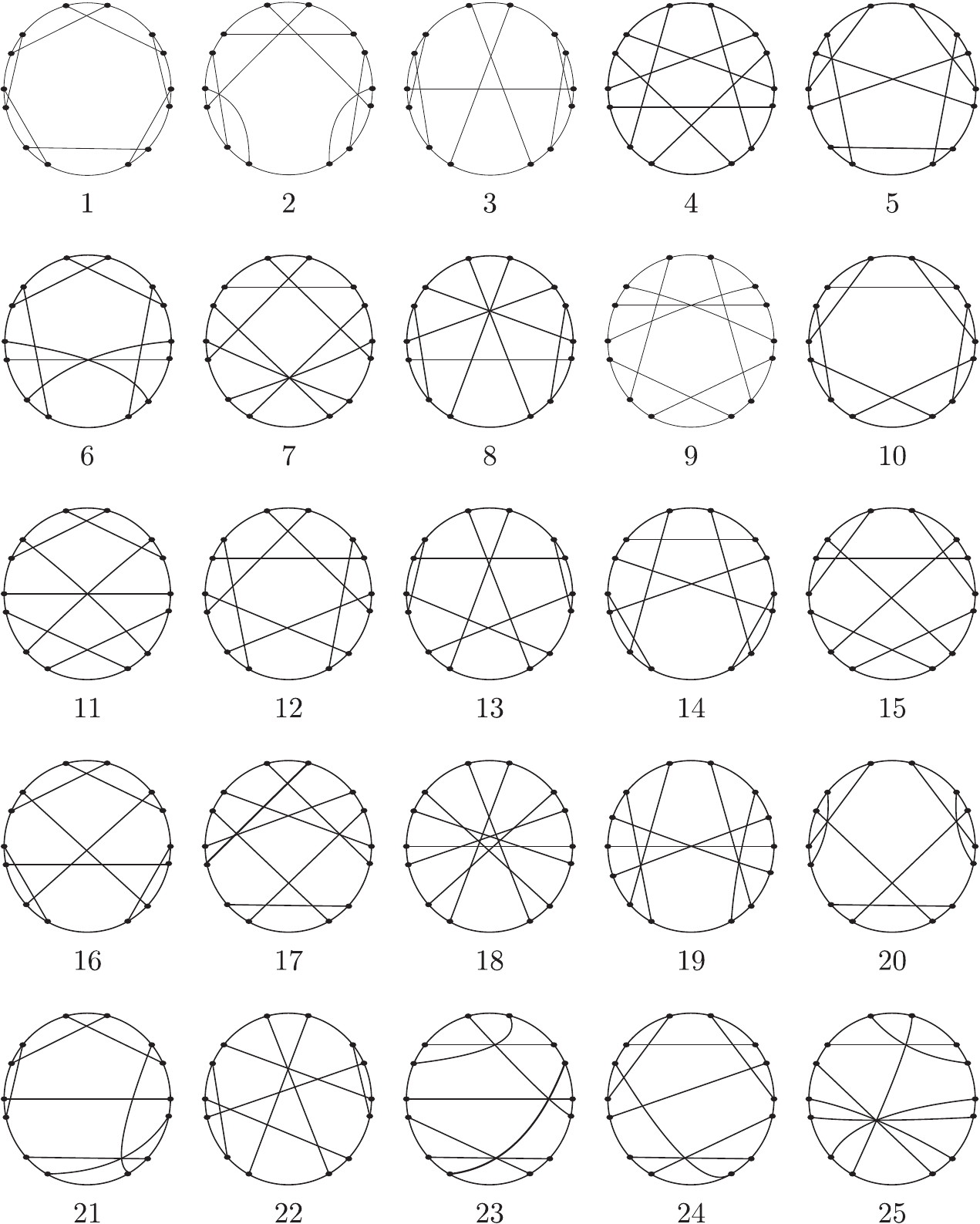}
\end{center}

We take into account possible symmetries and rotations of these chord diagrams and in such a way we count the number of nonequivalent atoms and $f$-atoms of saddle critical level. We use the abbreviation c.d.\ for chord diagrams.

Thus, for example, we describe how many atoms we get from each c.d.\ 1--25: from c.d.~1 we get 1 atom, from 2~--~7, 3~--~4, 4~--~1, from 5, 6, 7 and 8 we get 7 atoms, but 9 c.d.\ coincides with the 6 one and 10 with the 1~c.d., from 11 we get 4 atoms, 12~c.d.\ coincides with the 4,~13~-- with the 8 and 14~-- with the~5. We get 7~atoms from 15~c.d., but 16~c.d.\ coincides with the 15 c.d.\ and 17~-- with the~7. We get 1 atom from 18~c.d., from 19~-- 4~atoms. The 20 c.d.\ coincides with the 2~c.d., but 21~c.d.\ corresponds to 4~atoms, 22~-- to 7~atoms, each c.d.~23 and 25 corresponds to 8~atoms. The 24~c.d.\ coincides with the 21~c.d.

As a result we get 94 optimal functions of class $\Omega(M)$ up to layer equivalence. In the same way you can make sure that there exist 182 functions from~$\Omega(M)$ with three critical points up to topological equivalence.

\section{Conclusion}

We've got the following results for the function with isolated critical point on the boundary of the surface which are also isolated critical points of the restriction of this function to the boundary of the surface:
\begin{itemize}\itemsep=0pt
\item local topological presentation about saddle critical point and also about maximum and minimum critical points;
\item the recurrent formula of a number of saddle critical level atoms;
\item criterion of optimality;
\item criterion of topological equivalence of optimal functions in terms of chord diagrams in case of oriented surface with one component of the boundary.
\end{itemize}

Problems of topological equivalence of optimal functions def\/ined on surface with more than one component of the boundary and on non-oriented surface are still open.

\subsection*{Acknowledgements}
This paper partially based on the talks of the f\/irst author given at the AUI's seminars on Topology of functions with isolated critical points on the boundary of a 2-dimensional manifold (March 2--15, 2017, AUI, Vienna, Austria) and partially supported by the project between the Austrian Academy of Sciences and the National Academy of Sciences of Ukraine on Modern Problems in Noncommutative Astroparticle Physics and Categorian Quantum Theory.

\pdfbookmark[1]{References}{ref}
\LastPageEnding

\end{document}